\numberwithin{equation}{section}
\def\R{\mathbb{R}}
\def\wt{\widetilde}
\def\wh{\widehat}
\def\Z{\mathbb{Z}}
\def\H{\mathfrak{H}}
\def\A{\mathscr{A}}
 \def\E{\mathbb{E}}
\def\to{\rightarrow}
\def\bv{\big\vert}
 \def\e{\varepsilon}
\def\H{\mathfrak{H}}
\def\X{\mathfrak{X}}
\def\Var{{\rm Var}}
\newtheorem{thm}{Theorem}[section]
\newtheorem{lemma}[thm]{Lemma}
\newtheorem{cor}[thm]{Corollary}
\newtheorem{prop}[thm]{Proposition}
\newtheorem{obs}[thm]{Remark}
\newtheorem{definition}[thm]{Definition}
\def\1{{\rm l}\hskip -0.21truecm 1}
\begin{document}

\title{Asymptotic behavior of large Gaussian correlated Wishart matrices}

 \date{\today}

\author{Ivan Nourdin \and Guangqu Zheng}

\address{Ivan Nourdin:  Universit\'e du Luxembourg,
Unit\'e de Recherche en Math\'ematiques,
Maison du Nombre,
6 avenue de la Fonte,
L-4364 Esch-sur-Alzette,
Grand Duchy of Luxembourg}.
\email{ivan.nourdin@uni.lu}
\thanks{I. Nourdin was partially supported  by the Grant F1R-MTH-PUL-15CONF at Luxembourg University}

\address{Guangqu Zheng: University of Melbourne, School of Mathematics and Statistics, Peter Hall Building, Monash Road, Parkville  Vic 3010, Australia}
\email{zhengguangqu@gmail.com}

\subjclass[2010]{Primary: 60B20, 60F05;
Secondary:  60G22,60H07.}

\date{\today}

\keywords{Stein's method; Malliavin calculus; High-dimensional regime; Rosenblatt-Wishart matrix.}

\begin{abstract}
    
We consider high-dimensional Wishart matrices $d^{-1}\mathcal{X}_{n,d}\mathcal{X}_{n,d}^T$, associated with a rectangular random matrix $\mathcal{X}_{n,d}$ of size $n\times d$ whose entries are jointly Gaussian and correlated. Even if we will consider the case of overall correlation among the entries of $\mathcal{X}_{n,d}$, our main focus is on the case where the  rows of $\mathcal{X}_{n,d}$ are independent copies of a $n$-dimensional stationary centered Gaussian vector of correlation function $s$. 
When $s$ belongs to $\ell^{4/3}(\Z)$, we show that a proper normalization of 
$d^{-1}\mathcal{X}_{n,d}\mathcal{X}_{n,d}^T$ is close in {\it Wasserstein distance} to the corresponding Gaussian ensemble {\it as long as $d$ is much larger than $n^3$}, thus recovering the main finding of \cite{BDER16,JiangLi13} and extending it to a larger class of matrices.
We also investigate the case where $s$ is the correlation function associated with the fractional Brownian noise of parameter $H$. This example is very rich, as it gives rise to a great variety of phenomena with very different natures, depending on how $H$ is located with respect to $1/2$, $5/8$ and $3/4$. 
Notably, when $H>3/4$, our study highlights a new probabilistic object, which we have decided to call the \emph{Rosenblatt-Wishart matrix}.
Our approach crucially relies on the fact that the entries of the Wishart matrices we are dealing with are double Wiener-It\^o integrals, allowing us to make use of multivariate bounds arising from the Malliavin-Stein method and related ideas.
To conclude the paper, we analyze the situation where the
row-independence assumption is relaxed and we also look at the setting of random $p$-tensors ($p\geq 3$), a natural extension of Wishart matrices. 

\end{abstract}

\maketitle

\section{Introduction and main results}

Let $\mathcal{X}_{n,d}=(X_{ij})_{1\leq i\leq n,1\leq j\leq d}$ be a $n\times d$ random matrix whose entries are {\it independent} copies of a real centered  random variable  with unit variance (say).  The (real)  {\it Wishart matrix}  $d^{-1}\mathcal{X}_{n,d}\mathcal{X}_{n,d}^T$, introduced by   the statistician J. Wishart \cite{Wishart28} in the late 20s,  has been very useful in the multivariate statistics and it arises  naturally as the sample covariance matrix. When  $n$ is {\it fixed} and   $d$ goes to infinity, the matrix $d^{-1}  \mathcal{X}_{n,d}\mathcal{X}_{n,d}^T$ converges almost surely to the identity matrix $\mathcal{I}_n$, according to the strong law of large numbers. 
Moreover,  the multivariate central limit theorem implies that 
the fluctuations  of  Wishart matrix around $\mathcal{I}_n$ are Gaussian, provided  $X_{11}$ has the finite fourth moment.

For a long time, the case where $n$ is fixed was enough for applications.
But in the current world filled with large data sets, there has been  a change of paradigm: that both 
$d$ and $n$ are large simultaneously has now become the rule rather than the exception; see \emph{e.g.} Johnstone's ICM survey \cite{Johnstone06}.  In such a context, one can no longer merely rely on the law of large numbers and the classical  central limit theorem to analyze the asymptotic behavior
of the Wishart matrix.

 In order to describe the asymptotic behavior of $d^{-1} \mathcal{X}_{n,d}\mathcal{X}_{n,d}^T$ when {\it both} $d$ and $n$ go to infinity, a classical strategy in random matrix theory consists in   analyzing the weak convergence of its {\it empirical spectral 
distribution} $\mu_{n,d}$, defined as
$\mu_{n,d}=n^{-1} \sum_{i=1}^n \delta_{\lambda_i(n,d)}$, where $\delta_\lambda$ stands for the Dirac mass at $\lambda$ and $\lambda_{1}(n,d)\leq\ldots\leq\lambda_{n}(n,d)$ are the eigenvalues of $d^{-1}  \mathcal{X}_{n,d}\mathcal{X}_{n,d}^T$.
It is   known since Marchenko and Pastur \cite{MP67} that, if $n=n(d)$ {\it diverges} in a way such that $n/d\to c\in(0,\infty)$, then
$\mu_{n,d}$ converges weakly to $\mu_c=\max\{0, 1-c^{-1}\}\delta_0 + ( 2\pi c x)^{-1}\sqrt{(a_+-x)(x-a_-)}{\bf 1}_{[a_-,a_+]}(x)dx$ with  
  $a_{\pm}=(1\pm \sqrt{c})^2$.
At the fluctuation level, it is also known that linear statistics of eigenvalues of Wishart matrix satisfies central limit theorems under certain conditions; see for instance \cite{Chatterjee08} and references therein.

\medskip

However, for some applications the previous way to describe the asymptotic behavior of a large Wishart matrix might not be appropriate (for several possible reasons: because $n$ and $d$ are not of the same order, or because we do not have access to eigenvalues, etc.)
In this work, we take another approach, recently introduced in \cite{BDER16,JiangLi13} and that we shall describe now.
To ease the presentation, we start with the following definition.

\begin{definition}\label{def1}
For each $n\geq 1$, suppose that we have two families  $\{\mathcal{W}_{n,d}:\,d\geq 1\}$, $\{ \mathcal{Z}_{n,d}:\,d\geq 1\}$  of $n\times n$ random matrices. Consider a function $\phi:\mathbb{N}^*\times \mathbb{N}^*\to[0,\infty]$. We say that  $\mathcal{W}_{n,d}$ is \emph{$\phi$-close} to $\mathcal{Z}_{n,d}$ if  the Fortet-Mourier distance $d_{\rm FM}\big(\mathcal{W}_{n,d},  \mathcal{Z}_{n,d} \big)$  between  them tends to zero,  when $d, n\to \infty$ and $\phi(n,d)\to 0$. 
\end{definition}
 
In Definition \ref{def1}, we  use   the \emph{Fortet-Mourier distance} between two random variables with values in  $\mathcal{M}_n(\R)$, the space of $n\times n$ real matrices. Let us recall its definition: if $\mathcal{X}$ and $\mathcal{Y}$ are two such random matrices, then
\begin{equation}\label{dfm}
d_{\rm FM}(\mathcal{X},\mathcal{Y}) := \sup\big\{   \E[g(\mathcal{X})]-\E[g(\mathcal{Y})] \, : \, \|g\|_\infty+\| g \| _{\rm Lip} \leq 1 \, \big\} \,,
\end{equation}
with ${\displaystyle  \qquad\qquad
 \|g\|_\infty := \sup_{A\in\mathcal{M}_n(\R)}|g(A)|\quad\mbox{and}\quad
\|g\|_{\rm Lip} :=\sup_{\substack{A,B\in\mathcal{M}_n(\R)\\A\neq B}}\frac{|g(A)-g(B)|}{\|A-B\|_{\rm HS}}, }
 $

\noindent{}where $\|\cdot\|_{\rm HS}$ is the Hilbert-Schmidt norm on $\mathcal{M}_n(\R)$.      Since $\big(\mathcal{M}_n(\R), \| \cdot \| _{\rm HS}\big)$ is a Polish space, it is well known that $d_{\rm FM}$ characterizes the weak convergence of probability measures on $\mathcal{M}_n(\R)$; see \emph{e.g.} \cite[Section 11.3]{Dudley10}. 
 We will also use the \emph{Wasserstein distance}, which is a stronger distance and defined in a similar way:
 \begin{equation}\label{dwass}
d_{\rm Wass} (\mathcal{X},\mathcal{Y}) = \sup\big\{   \E[g(\mathcal{X})]-\E[g(\mathcal{Y})] \, : \, \| g \| _{\rm Lip} \leq 1 \, \big\} \,.
\end{equation}
It is trivial that $d_{\rm FM}(\mathcal{X},\mathcal{Y})\leq d_{\rm Wass} (\mathcal{X},\mathcal{Y})$, so that any bound on the Wasserstein distance implies the same bound for the Fortet-Mourier distance. 
\medskip

With the above definition and notation in mind, let us go back to the study of high-dimensional fluctuation of  Wishart matrices by considering
a normalized version of $d^{-1}\mathcal{X}_{n,d}\mathcal{X}_{n,d}^T$,  namely
\begin{equation}\label{wnd}
\mathcal{W}_{n,d}=\sqrt{d}  \left( \frac1d\mathcal{X}_{n,d} \mathcal{X}_{n,d}^T  - \mathcal{I}_n\right).
\end{equation}
Also, consider the GOE matrix 
\begin{equation}\label{gn}
\mathcal{Z}_n = (Z_{ij})_{1\leq i,j\leq n},
\end{equation}
where 
$Z_{ii}\sim N(0,2)$, $Z_{ij}\sim N(0,1)$ for $i<j$, $Z_{ij}=Z_{ji}$ for $i>j$, and 
$\{Z_{ij}, i\leq j\}$ are independent. 
 Interestingly, the following phenomenon discovered independently in \cite{BDER16,JiangLi13}  arises:  when the entry distribution of $\mathcal{X}_{n,d}$ is {\it standard Gaussian}, 
 then $\mathcal{W}_{n,d}$ and $\mathcal{Z}_n$ are $\phi$-close (with respect to  the total variation distance\footnote{Due to the explicit density functions of these two random matrix ensembles, the authors of \cite{BDER16} were indeed able to compute the total variation distance as the $L_1$ distance between densities; see also the earlier work \cite{JiangLi13}, in which the same conclusion was derived independently, using quite involved spectral analysis.})  for $\phi(n,d)=n^3/d$.
Otherwise stated, one cannot distinguish
between the laws of  $\mathcal{W}_{n,d}$ and $\mathcal{Z}_n$ when $n$ and $d$ go to infinity with $d$ much larger than $n^3$. 
 It is also proved  in \cite{BDER16} that the condition $n^3=o(d)$ is sharp in the following sense: if we assume this time that $d=o(n^3)$, then the total variation distance
between the laws of  $\mathcal{W}_{n,d}$ and $\mathcal{Z}_n$ goes to $1$; see \cite{BDER16} for precise statements and see also \cite{smooth} for a proof that this phase transition from $d=o(n^3)$ to $n^3=o(d)$ is smooth.   The  work \cite{BDER16} was soon generalized in \cite{BG16} to  the setting that the entries of $\mathcal{X}_{n,d}$ are {\it i.i.d. log-concave}\footnote{that is, $X_{11}$ has a density function $\phi$ such that $\log \phi$ is a concave function.} random variables and some very similar results on the high-dimensional regime were obtained therein.

 \medskip

In the present paper, we take another path of generalization, by  
 relaxing the full independence assumption that is made in \cite{BDER16,BG16} on the entries
of $\mathcal{X}_{n,d}$.  In fact, the most basic phenomena in multivariate analysis is that of correlation -- the tendency of quantities to vary together. And the appearance of correlation usually increases drastically  the complexity of the problem at hand, see \emph{e.g.} \cite[Section 6]{BG16}.  As a first step, we have decided to mainly focus on the case where the entries of $\mathcal{X}_{n,d}$ are {\it Gaussian} and exhibit {\it row independence}, that is, we will allow the columns of $\mathcal{X}_{n,d}$ to be correlated, but not its rows; see Section 4 for further results on  the case of overall correlation.

Here is our precise framework. Let $\H$ be a real separable Hilbert space equipped with the inner product $\langle\cdot,\cdot\rangle_\H$ and the Hilbert norm  $\|\cdot\|_\H$, and let $\{ e_{ij}: i,j\geq 1\}\subset\H$ be a family such that
\begin{equation}\label{covariance}
\langle e_{ij},e_{i'j'}\rangle_\H = \mathbf{1}_{\{i=i'\}}s(j-j').
\end{equation}
In \eqref{covariance},
$s:\mathbb{Z}\to\R$ stands for some correlation function satisfying suitable assumptions to be given later on. One of them is that $s(0)=1$, implying in particular that $\|e_{ij}\|_\H=1$ for all $i,j\geq 1$.

Consider the corresponding Gaussian sequence $X_{ij}=X(e_{ij})\sim N(0,1)$, where $X=\{X(h),\,h\in\H\}$ is a centered Gaussian process indexed by $\H$ such
that $\E[X(g)X(h)]=\langle g,h\rangle_\H$ for all $g,h\in\H$, that is, $X$ is an isonormal process over $\H$.
As before, let $\mathcal{X}_{n,d}^{(s)} $ be the $n\times d$ random matrix given by
\begin{equation}
\label{xnd}
 \mathcal{X}_{n,d}^{(s)}  =  
    \left( \begin{matrix}
    X_{11} & X_{12} &  \ldots   & X_{1d} \\
    X_{21} & X_{22} & \ldots & X_{2d} \\
    \vdots & \vdots & \vdots  &  \vdots  \\
    X_{n1} & X_{n2} &  \ldots   & X_{nd}
\end{matrix}  \right) .  
\end{equation}
Given the form of \eqref{covariance}, we note that the rows $\{ (X_{i1},\ldots,X_{id})$, $i=1,\ldots,n\}$ of $\mathcal{X}_{n,d}^{(s)} $ are independent and identically distributed.

We are now in a position to state our first main result, in which we basically extend the main
result of \cite{BDER16,JiangLi13} to rectangular random matrices of the type (\ref{xnd}), that is, to a situation where the entries of $\mathcal{X}_{n,d}$ are {\it Gaussian} and {\it partially correlated}.

\begin{thm}[Gaussian approximation]\label{thm1} 
Let $\mathcal{X}_{n,d}^{(s)}$ be given by \eqref{xnd}, and consider
\begin{equation}\label{wnds}
\mathcal{W}_{n,d}^{(s)}=(W_{ij})_{1\leq i,j\leq n}=\sqrt{d}  \left( \frac1d\mathcal{X}_{n,d}^{(s)} (\mathcal{X}_{n,d}^{(s)})^T  - \mathcal{I}_n\right).
\end{equation}
Let $\mathcal{G}^{(s)}_{n,d}=(G_{ij})_{1\leq i,j\leq n}$ be a $n\times n$ symmetric random  matrix such that the associated random vector
$$
(G_{11},\ldots, G_{1n},G_{21},\ldots, G_{2n},\ldots,G_{n1},\ldots,G_{nn})^T
$$ 
of $\R^{n^2}$ is Gaussian and 
has the same covariance matrix as 
$$
(W_{11},\ldots, W_{1n},W_{21},\ldots, W_{2n},\ldots,W_{n1},\ldots,W_{nn})^T.
$$ 
Then, for any $n,d\geq 1$, one has
\begin{eqnarray}
d_{\rm Wass} \big(  \mathcal{W}_{n,d}^{(s)},  \mathcal{G}_{n,d}^{(s)}   \big)
&\leq& \sqrt{\frac{192n^3}{ \sum_{|k|\leq d} (1 - \frac{|k|}{d} ) s(k)^2  }\times \frac{1}d\left(  \sum_{\vert k \vert \leq d} \vert s(k) \vert^{4/3}   \right)^{3}}\label{cl1}\\
&\leq& \sqrt{\frac{192n^3}d\left(  \sum_{\vert k \vert \leq d} \vert s(k) \vert^{4/3}   \right)^{3}}\notag \quad (\mbox{\rm since ${\displaystyle\sum_{|k|\leq d} (1 - \frac{|k|}{d} ) s(k)^2\geq s(0)^2=1}$}).
\end{eqnarray}
Moreover, if $s\in \ell^2(\mathbb{Z})$ and if we let $\mathcal{Z}^{(s)}_{n}=(Z_{ij})_{1\leq i,j\leq n}$ denote a $n\times n$ symmetric random matrix such that
$Z_{ii}\sim N(0,2\|s\|^2_{\ell^2(\Z)})$, $Z_{ij}\sim N(0,\|s\|^2_{\ell^2(\Z)})$ for $i<j$, $Z_{ij}=Z_{ji}$ for $i>j$, and 
$\{Z_{ij}, 1\leq i\leq j\leq n\}$ independent, we have
\begin{equation}
d_{\rm Wass} \big(  \mathcal{G}_{n,d}^{(s)}, \mathcal{Z}_{n}^{(s)}    \big)
\leq \frac{2\sqrt{n(n+1)}}{\|s\|_{\ell^2(\Z)}} \left(
\sum_{|k|>d}s(k)^2+\frac1d\sum_{|k|\leq d}|k| s(k)^2\right) . \label{cl2}
\end{equation}
\end{thm}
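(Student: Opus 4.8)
The plan is to prove the two assertions of Theorem~\ref{thm1} separately, since they are of genuinely different natures: the bound \eqref{cl1} is a \emph{quantitative CLT} comparing the non-Gaussian matrix $\mathcal{W}_{n,d}^{(s)}$ to its Gaussian analogue $\mathcal{G}_{n,d}^{(s)}$, whereas \eqref{cl2} is a \emph{Gaussian-to-Gaussian} comparison that reduces to controlling how the covariance of $\mathcal{G}_{n,d}^{(s)}$ differs from that of $\mathcal{Z}_n^{(s)}$. For \eqref{cl1}, the crucial structural observation is that each entry $W_{ij}$ of $\mathcal{W}_{n,d}^{(s)}$ lives in the \emph{second Wiener chaos} of the isonormal process $X$: indeed $\frac1{\sqrt d}\sum_{k=1}^d X_{ik}X_{jk}$ is, after centering, a sum of products $X(e_{ik})X(e_{jk})$, and by the product formula for Wiener--It\^o integrals each such product equals a double integral $I_2(e_{ik}\widetilde\otimes e_{jk})$ plus the inner product term that the centering $-\mathcal{I}_n$ removes. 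Thus the whole vector $(W_{ij})$ is a vector of double integrals $W_{ij}=I_2(f_{ij})$ for explicit symmetric kernels $f_{ij}\in\H^{\odot 2}$, and $\mathcal{G}_{n,d}^{(s)}$ is by construction the Gaussian vector with the matching covariance.

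First I would invoke the multivariate Malliavin--Stein bound for vectors of second-chaos integrals against a Gaussian target with the same covariance. The standard estimate (the Nourdin--Peccati fourth-moment machinery, in its multidimensional form) controls $d_{\rm Wass}$ between a vector $F=(I_2(f_{ij}))$ and the matching Gaussian $\mathcal{G}$ by a constant times a sum of contraction norms $\|f_{ij}\otimes_1 f_{k\ell}\|_\H$ over all pairs of index-pairs; for kernels in the second chaos these contraction norms are exactly the quantities that govern the distance, and one gets a bound of the form $d_{\rm Wass}(\mathcal{W}_{n,d}^{(s)},\mathcal{G}_{n,d}^{(s)})\le C\,\big(\sum_{(i,j),(k,\ell)}\|f_{ij}\otimes_1 f_{k\ell}\|_\H^2\big)^{1/2}$. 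Here the Lipschitz-only nature of $d_{\rm Wass}$ (as opposed to needing a smoother test class) is what lets us use the first-order contraction bound directly. The main work is then to compute these contractions explicitly using \eqref{covariance}: because $\langle e_{ij},e_{i'j'}\rangle_\H=\mathbf 1_{\{i=i'\}}s(j-j')$, a contraction $f_{ij}\otimes_1 f_{k\ell}$ collapses to sums of products of values $s(\cdot)$, and after summing the squares over the $n^2$ choices of each index-pair one extracts a factor $n^3$ (from the three free summations over the $n$ rows that survive the indicator constraints) multiplied by $\frac1d\big(\sum_{|k|\le d}|s(k)|^{4/3}\big)^3$ (from a H\"older/Young-convolution estimate on the $s$-sums, the exponent $4/3$ being precisely the one that makes the convolution inequality sharp). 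Normalizing by the variance $\sum_{|k|\le d}(1-\tfrac{|k|}d)s(k)^2$ produces the first line of \eqref{cl1}, and the second line follows from $s(0)^2=1$.

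For \eqref{cl2} I would use the elementary fact that the Wasserstein distance between two \emph{centered Gaussian} vectors in $\R^m$ with covariance matrices $\Sigma_1,\Sigma_2$ is bounded by (a constant times) $\|\Sigma_1-\Sigma_2\|$ in an appropriate norm; concretely one couples them through a common standard Gaussian and estimates $\E\|\Sigma_1^{1/2}N-\Sigma_2^{1/2}N\|_{\rm HS}$, which is controlled by the Hilbert--Schmidt norm of $\Sigma_1^{1/2}-\Sigma_2^{1/2}$ and hence, after a square-root Lipschitz estimate dividing by $\|s\|_{\ell^2(\Z)}$, by $\|\Sigma_1-\Sigma_2\|_{\rm HS}/\|s\|_{\ell^2(\Z)}$. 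It then remains to compute the entrywise differences between the covariance of $\mathcal{G}_{n,d}^{(s)}$ and the diagonal covariance of $\mathcal{Z}_n^{(s)}$: the variance of each $W_{ij}$ equals (up to the factor $2$ on the diagonal) $\sum_{|k|\le d}(1-\tfrac{|k|}d)s(k)^2$, while the target variance is $\|s\|_{\ell^2(\Z)}^2=\sum_{k\in\Z}s(k)^2$, so each difference is exactly $\sum_{|k|>d}s(k)^2+\frac1d\sum_{|k|\le d}|k|\,s(k)^2$; summing these over the $n(n+1)/2$ independent entries (hence the $\sqrt{n(n+1)}$ and the factor $2$) gives \eqref{cl2}.

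The step I expect to be the main obstacle is the explicit contraction computation feeding \eqref{cl1}: keeping track of which Kronecker-delta constraints from \eqref{covariance} survive, verifying that exactly three row-indices remain free so that the combinatorial factor is $n^3$ and not $n^2$ or $n^4$, and then matching the resulting $s$-sum to the precise power $\big(\sum|s(k)|^{4/3}\big)^3$ via a Young-type convolution inequality. Getting the constant $192$ and the exponent $4/3$ to come out exactly—rather than some looser bound—requires care, and it is here that the choice of the Wasserstein (Lipschitz) test class, the symmetrization of the kernels $f_{ij}$, and the sharp form of the chaos contraction estimate all have to be aligned.
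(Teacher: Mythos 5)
Your plan for \eqref{cl1} is, in substance, the paper's own proof (second–chaos representation of the entries, the multivariate Malliavin--Stein bound of \cite{NPRev10} recalled as Proposition \ref{NPR10-thm}, the vanishing of contractions for disjoint index pairs, the $O(n^3)$ count of non-disjoint quadruples, and the Young/H\"older estimate producing $\frac1d\big(\sum_{|k|\le d}|s(k)|^{4/3}\big)^3$), but it has one genuine gap: you propose to apply the Wasserstein bound to ``the whole vector $(W_{ij})$'', i.e.\ to all $n^2$ entries. That bound carries the prefactor $\|C^{-1}\|_{\rm op}\|C\|_{\rm op}^{1/2}$ and requires the target covariance $C$ to be \emph{invertible}; for the full $n^2$-dimensional vector $C$ is always singular, because the symmetry $W_{ij}=W_{ji}$ duplicates rows of $C$. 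This is exactly why the paper first passes to the half-vector $(\mathcal{W}_{n,d}^{(s)})^{\rm half}$ of upper-triangular entries --- whose covariance, thanks to the row-independence structure \eqref{covariance}, is diagonal with entries $\frac1d\sum_{k,\ell}s(k-\ell)^2$ and $\frac2d\sum_{k,\ell}s(k-\ell)^2$, hence invertible --- and then transfers the estimate back to matrices via Lemma \ref{22}, $d_{\rm Wass}(\mathcal{X},\mathcal{Y})\le\sqrt2\,d_{\rm Wass}(\mathcal{X}^{\rm half},\mathcal{Y}^{\rm half})$; this $\sqrt2$ is precisely where $192=2\cdot 96$ comes from. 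Without that reduction (and the verification that the half-vector covariance is diagonal), the inequality you want to invoke simply does not apply, so you should insert this step explicitly; once it is in place, the rest of your outline matches the paper's Steps 2--4.

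Your treatment of \eqref{cl2}, by contrast, is a genuinely different route from the paper's and it works. The paper reuses Proposition \ref{NPR10-thm} with $p_1=\ldots=p_m=1$ (viewing both vectors as first-chaos, i.e.\ Gaussian) on the half-vectors, again paying the Lemma \ref{22} factor; you instead couple the two Gaussian vectors through a common standard Gaussian and control $\|\Sigma_1^{1/2}-\Sigma_2^{1/2}\|_{\rm HS}$ by $\|\Sigma_1-\Sigma_2\|_{\rm HS}/\|s\|_{\ell^2(\Z)}$. Since both covariance matrices here are diagonal, the square-root Lipschitz step is the scalar inequality $|\sqrt a-\sqrt b|\le|a-b|/\sqrt b$, the entrywise covariance gaps are exactly $\sum_{|k|>d}s(k)^2+\frac1d\sum_{|k|\le d}|k|s(k)^2$ (with a factor $2$ on the diagonal), and your coupling can even be performed directly at the matrix level, bypassing invertibility and the half-vector reduction altogether and yielding a constant at least as good as the stated one. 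What the paper's route buys is methodological uniformity (one proposition handles both parts); what yours buys is an elementary, self-contained argument for the Gaussian-to-Gaussian comparison.
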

       
\medskip
        
       \begin{obs} {\rm
       (i) For instance, if $s(r)={\bf 1}_{\{r=0\}}$, that is,  if the entries $\{X_{ij}: i,j\geq 1\}$ of $\mathcal{X}_{n,d}$ are i.i.d standard Gaussian, then  we recover from  Theorem \ref{thm1}   the result of \cite{BDER16,JiangLi13}; $\mathcal{W}_{n,d}$ given by (\ref{wnd}) is close to $\mathcal{Z}_{n}$ given by (\ref{gn}) when $n^3/d\to 0$. 

\noindent{\qquad (ii)}  
If we assume that $s\in\ell^{4/3}(\Z)$, then (\ref{cl1}) leads to $d_{\rm Wass} \big(  \mathcal{W}_{n,d}^{(s)},  \mathcal{G}_{n,d}^{(s)}   \big)  =O\big(\sqrt{n^3/d}\big)$.  In this case, $ \mathcal{W}^{(s)}_{n,d}$ continues to be close to  $\mathcal{G}^{(s)}_{n,d}$ as soon as  $n^3/d\to 0$, exactly like in the full independent case considered in \cite{BDER16,BG16,JiangLi13}; see also (i).
}
\end{obs}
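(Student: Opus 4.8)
The statement to prove is the \emph{Remark} immediately following Theorem~\ref{thm1}, whose two parts are both direct consequences of the bounds \eqref{cl1} and \eqref{cl2}, which I take as already established. The plan is therefore not to re-derive anything, but to specialize the two inequalities to the stated choices of $s$ and read off the asymptotic conclusions.

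For part (i) I would set $s(r)=\mathbf{1}_{\{r=0\}}$. Then $\mathcal{X}_{n,d}^{(s)}$ has i.i.d.\ standard Gaussian entries, so $\mathcal{W}_{n,d}^{(s)}$ coincides with $\mathcal{W}_{n,d}$ from \eqref{wnd}. Substituting this $s$ into \eqref{cl1}, only the $k=0$ term survives in each sum: the numerator factor $\big(\sum_{|k|\le d}|s(k)|^{4/3}\big)^3=1$ and the denominator factor $\sum_{|k|\le d}(1-|k|/d)s(k)^2=1$, giving $d_{\rm Wass}(\mathcal{W}_{n,d}^{(s)},\mathcal{G}_{n,d}^{(s)})\le\sqrt{192\,n^3/d}$. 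Since also $\|s\|_{\ell^2(\Z)}^2=1$ and $s(k)=0$ for all $k\ne0$, the right-hand side of \eqref{cl2} vanishes, so $\mathcal{G}_{n,d}^{(s)}=\mathcal{Z}_n^{(s)}$ in law, and with $\|s\|^2_{\ell^2(\Z)}=1$ the matrix $\mathcal{Z}_n^{(s)}$ is exactly the GOE matrix $\mathcal{Z}_n$ of \eqref{gn}. Combining the two bounds by the triangle inequality for $d_{\rm Wass}$ (and recalling $d_{\rm FM}\le d_{\rm Wass}$) yields $d_{\rm Wass}(\mathcal{W}_{n,d},\mathcal{Z}_n)\le\sqrt{192\,n^3/d}\to0$ whenever $n^3/d\to0$, which is precisely the recovery of the result of \cite{BDER16,JiangLi13}.

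For part (ii) I would assume $s\in\ell^{4/3}(\Z)$. Then $\sum_{|k|\le d}|s(k)|^{4/3}\le\|s\|_{\ell^{4/3}(\Z)}^{4/3}$ is bounded uniformly in $d$, so the factor $\big(\sum_{|k|\le d}|s(k)|^{4/3}\big)^3\le\|s\|_{\ell^{4/3}(\Z)}^4=O(1)$. Feeding this into the second (simpler) form of \eqref{cl1} gives $d_{\rm Wass}(\mathcal{W}_{n,d}^{(s)},\mathcal{G}_{n,d}^{(s)})\le\sqrt{192\,n^3\|s\|^4_{\ell^{4/3}(\Z)}/d}=O\big(\sqrt{n^3/d}\big)$, whence $\mathcal{W}_{n,d}^{(s)}$ stays close to $\mathcal{G}_{n,d}^{(s)}$ as soon as $n^3/d\to0$, exactly as in the fully independent case.

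There is essentially no obstacle here: both parts are one-line specializations of already-proven estimates. The only points requiring a modicum of care are (a) verifying that the $s$ of part (i) really collapses \emph{both} sums in \eqref{cl1} to their $k=0$ terms and forces the right-hand side of \eqref{cl2} to zero, so that $\mathcal{Z}_n^{(s)}$ coincides with the GOE matrix $\mathcal{Z}_n$; and (b) noting in part (ii) that $\ell^{4/3}(\Z)\subset\ell^2(\Z)$ (since $4/3<2$), so that the second conclusion \eqref{cl2} is also available should one wish to compare with $\mathcal{Z}_n^{(s)}$ rather than only with $\mathcal{G}_{n,d}^{(s)}$.
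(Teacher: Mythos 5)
Your proposal is correct and follows exactly the route the paper intends for this remark: both parts are direct specializations of the bounds \eqref{cl1} and \eqref{cl2}, with part (i) additionally using that the right-hand side of \eqref{cl2} vanishes when $s(r)={\bf 1}_{\{r=0\}}$ (so $\mathcal{G}_{n,d}^{(s)}$ and $\mathcal{Z}_n^{(s)}=\mathcal{Z}_n$ coincide in law) and the triangle inequality, and part (ii) using the uniform bound $\sum_{|k|\le d}|s(k)|^{4/3}\le\|s\|_{\ell^{4/3}(\Z)}^{4/3}$. The checks you flag (collapse of both sums to the $k=0$ term, and $\ell^{4/3}(\Z)\subset\ell^2(\Z)$) are precisely the points needed, and they are handled correctly.
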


As we just pointed out in the previous remark, $\mathcal{W}_{n,d}^{(s)}$ and $\mathcal{G}_{n,d}^{(s)}$ are asymptotically close as long as $s\in\ell^{\frac43}(\Z)$ and 
$n^3/d\to 0$. What happens when $s\in\ell^2(\mathbb{Z})\setminus\ell^{\frac43}(\Z)$ or when $s\not\in\ell^2(\mathbb{Z})$? And how close are $\mathcal{G}_{n,d}^{(s)}$ and  $\mathcal{Z}_{n}^{(s)} $?
To exhibit an interesting situation where different behaviors may arise, starting from now on we will focus on the case where $s$ is the correlation function of the fractional Brownian  noise of Hurst index $H\in(0,1)$, that is,
\begin{equation}\label{frac}
s(k)=s_H(k)=\frac{1}{2}\big(|k+1|^{2H}+|k-1|^{2H}-2|k|^{2H}\big),\quad k\in\mathbb{Z}.
\end{equation}
When $H\neq \frac12$, one has $\vert  s_H(k) \vert  \sim c \vert k \vert^{2H-2}$ as $\vert k \vert\to\infty$ (with $c>0$ a constant whose value is immaterial and may change from one instance to another). It is a straightforward exercise to check that $s_H\in\ell^2(\Z)$ if and only if $H\in(0,\frac34)$. Moreover, as $d\to\infty$,
\begin{eqnarray*}
\frac{1}{d}  \left(
\sum_{|k|\leq d} |s_H(k)|^{4/3} 
 \right)^{3}&\sim& c\left\{
\begin{array}{ll}
1/d&\quad\mbox{if $0<H<5/8$}\\
(\log d)^{3}/d&\quad \mbox{if $H=5/8$}\\
d^{8H-6}&\quad \mbox{if $5/8<H<1$}
\end{array}
\right. ;\\
\sum_{|k|>d}s_H(k)^2&\sim& c \,d^{4H-3}\quad\mbox{for }H\in(0, 1/2)\cup(1/2,3/4);\\
\sum_{|k|\leq d} (1 - \frac{|k|}{d} ) s_H(k)^2&\sim& c\,\log d \quad\mbox{for }H=3/4 ;\\
\frac{1}{d}\sum_{|k|\leq d}|k|s_H(k)^2&\sim& c\left\{
\begin{array}{ll}
1/d&\quad\mbox{if $0<H< 1/2$}\\
d^{4H-3}&\quad \mbox{if $1/2<H<1$}
\end{array}
\right. .
\end{eqnarray*}
As a result, for $s_H$ given by \eqref{frac}, we deduce from (\ref{cl1}) that
$\mathcal{W}^{(s_H)}_{n,d}$ is $\phi$-close to $\mathcal{G}^{(s)}_{n,d}$ when $H\in(0,\frac34]$ and
\begin{equation}\label{limit}
\phi(n,d)=\frac{n^3}d\,{\bf 1}_{\{0<H<5/8\}}
+
n^3\frac{\log^3 d}{d}\,{\bf 1}_{\{H=5/8\}}
+
n^3 d^{8H-6}\,{\bf 1}_{\{5/8<H<3/4\}}+
\frac{n^3}{\log d}\,{\bf 1}_{\{H=3/4\}}.
\end{equation}
On the other hand, when $H\in(0,\frac34)$ (otherwise $\mathcal{Z}^{(s)}_{n}$ is not defined), we deduce from (\ref{cl2})
 that
$\mathcal{G}^{(s)}_{n,d}$ is $\psi$-close to $\mathcal{Z}^{(s)}_{n}$
for
\begin{equation}\label{limit2}
\psi(n,d)=\frac{n}d\,{\bf 1}_{\{0<H<1/2\}}
+
n d^{4H-3}\,{\bf 1}_{\{1/2<H<3/4\}}.
\end{equation}

\medskip

We summarize the above discussion as follows.

\begin{cor}[High-dimensional regime for fractional noise entries and Gaussian approximation] \label{cor-fbm} Assume that $s = s_H$ is given  by \eqref{frac} with $H\in(0, \frac34]$. 
Then 
\begin{itemize}

\item[(i)] when $H\in(0, 1/2)$: $\mathcal{W}^{(s_H)}_{n,d}$ is $\phi$-close to $\mathcal{G}^{(s_H)}_{n,d}$ for $\phi(n,d)=n^3/d$, whereas
$\mathcal{G}^{(s_H)}_{n,d}$ is $\psi$-close to $\mathcal{Z}^{(s_H)}_{n}$ for $\psi(n,d)=n/d$;

\item[(ii)] when $H=1/2$: $\mathcal{W}^{(s_H)}_{n,d}$ is $\phi$-close to $\mathcal{G}^{(s_H)}_{n,d}$ for $\phi(n,d)=n^3/d$, whereas
$\mathcal{G}^{(s_H)}_{n,d}$ and $\mathcal{Z}^{(s_H)}_{n}$ have the same law;

\item[(iii)] when $H\in(1/2,5/8)$: $\mathcal{W}^{(s_H)}_{n,d}$ is $\phi$-close to $\mathcal{G}^{(s_H)}_{n,d}$ for $\phi(n,d)=n^3/d$, whereas
$\mathcal{G}^{(s_H)}_{n,d}$ is $\psi$-close to $\mathcal{Z}^{(s_H)}_{n}$ for $\psi(n,d)=nd^{4H-3}$;

\item[(iv)] for $H=5/8$: $\mathcal{W}^{(s_H)}_{n,d}$ is $\phi$-close to $\mathcal{G}^{(s_H)}_{n,d}$ for $\phi(n,d)=(n^3\log^3 d)/d$, whereas
$\mathcal{G}^{(s_H)}_{n,d}$ is $\psi$-close to $\mathcal{Z}^{(s_H)}_{n}$ for $\psi(n,d)=n/\sqrt{d}$;

\item[(v)] for $H\in(5/8, 3/4)$:  $\mathcal{W}^{(s_H)}_{n,d}$ is $\phi$-close to $\mathcal{G}^{(s_H)}_{n,d}$ for $\phi(n,d)=n^3d^{8H-6} $, whereas
$\mathcal{G}^{(s_H)}_{n,d}$ is $\psi$-close to $\mathcal{Z}^{(s_H)}_{n}$ for $\psi(n,d)=nd^{4H-3}$;

\item[(vi)] for $H=3/4$:  $\mathcal{W}^{(s_H)}_{n,d}$ is $\phi$-close to $\mathcal{G}^{(s_H)}_{n,d}$ for $\phi(n,d)=n^3(\log d)^{-1} $,  whereas
$\mathcal{Z}^{(s_H)}_{n}$ is not defined.

\end{itemize}
\end{cor}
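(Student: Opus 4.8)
The plan is to read off Corollary \ref{cor-fbm} as a direct specialization of Theorem \ref{thm1} to the choice $s=s_H$ given by \eqref{frac}. Indeed, the author's phrase ``we summarize the above discussion'' is accurate: once the four families of sums entering the bounds \eqref{cl1} and \eqref{cl2} have been estimated asymptotically in $d$, the values of $\phi$ and $\psi$ in each regime follow by mere substitution. The entire analytic content therefore reduces to the asymptotic evaluation of those sums, which rests on the single elementary fact that, for $H\neq\tfrac12$,
\[
|s_H(k)| \sim c\,|k|^{2H-2} \qquad \text{as } |k|\to\infty .
\]

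First I would establish the four sum asymptotics displayed just before the corollary. For each sum I replace $|s_H(k)|$ by its tail equivalent $|k|^{2H-2}$ and compare the resulting $p$-series with the corresponding integral. This locates, in each case, the critical Hurst exponent at which the series passes from convergent to divergent: for $\sum_{|k|\le d}|s_H(k)|^{4/3}$ the relevant exponent is $(8H-8)/3$, critical at $H=\tfrac58$; for the $\ell^2$-tail $\sum_{|k|>d}s_H(k)^2$ and for $\sum_{|k|\le d}(1-\tfrac{|k|}{d})s_H(k)^2$ it is $4H-4$, critical at $H=\tfrac34$; and for $\tfrac1d\sum_{|k|\le d}|k|s_H(k)^2$ it is $4H-3$, critical at $H=\tfrac12$. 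In the convergent range the sum is bounded by a constant; in the divergent range it grows like the appropriate power of $d$; and exactly at the critical exponent one picks up the logarithmic corrections $(\log d)^3/d$ (at $H=\tfrac58$) and $\log d$ (at $H=\tfrac34$).

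With these estimates in hand, the conclusions follow by substitution. Feeding the $4/3$-sum into \eqref{cl1} yields $\phi(n,d)=n^3/d$, $(n^3\log^3 d)/d$ and $n^3 d^{8H-6}$ on $(0,\tfrac58)$, $\{\tfrac58\}$ and $(\tfrac58,\tfrac34)$ respectively, giving the $\mathcal{W}$--$\mathcal{G}$ side of cases (i)--(v); since $H<\tfrac34$ forces $s_H\in\ell^2(\Z)$, the denominator $\sum_{|k|\le d}(1-\tfrac{|k|}{d})s_H(k)^2$ there converges to the positive constant $\|s_H\|^2_{\ell^2(\Z)}$ and is absorbed into the implied constants. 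Feeding the last two sums into \eqref{cl2}, and using $\sqrt{n(n+1)}\sim n$ together with the constancy of $\|s_H\|_{\ell^2(\Z)}$, produces $\psi(n,d)=n/d$ for $H<\tfrac12$ and $\psi(n,d)=n\,d^{4H-3}$ for $H\in(\tfrac12,\tfrac34)$, which specializes to $n/\sqrt{d}$ at $H=\tfrac58$; this gives the $\mathcal{G}$--$\mathcal{Z}$ side of (i), (iii), (iv), (v). Two borderline points I would treat by hand: at $H=\tfrac12$ one has exactly $s_H(k)=\mathbf{1}_{\{k=0\}}$, so both sums in \eqref{cl2} vanish identically and $\mathcal{G}^{(s_H)}_{n,d}$, $\mathcal{Z}^{(s_H)}_n$ share the same law, which is case (ii); and at $H=\tfrac34$, where $s_H\notin\ell^2(\Z)$ and $\mathcal{Z}^{(s_H)}_n$ is no longer defined, one must retain the sharper form of \eqref{cl1} and use that its denominator now grows like $\log d$, whence $\phi(n,d)=n^3/\log d$, which is case (vi).

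The main obstacle is not any single deep step but the careful bookkeeping across the three critical exponents and, in \eqref{cl2}, the determination of which of its two terms dominates: since $\sum_{|k|>d}s_H(k)^2\asymp d^{4H-3}$ competes with $\tfrac1d\sum_{|k|\le d}|k|s_H(k)^2\asymp\max(d^{-1},d^{4H-3})$, the winner is decided by the sign of $H-\tfrac12$, and this is precisely what separates $\psi=n/d$ from $\psi=n\,d^{4H-3}$. Getting the logarithmic factors and the $H=\tfrac34$ refinement right, rather than blindly applying the simplified second inequality in \eqref{cl1}, is the only genuinely delicate point.
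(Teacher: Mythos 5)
Your proposal is correct and follows essentially the same route as the paper: establish the four asymptotic estimates for the sums of $|s_H(k)|^{4/3}$, the $\ell^2$-tail, the C\'esaro-type $\ell^2$-sum, and the weighted sum $\frac1d\sum_{|k|\le d}|k|s_H(k)^2$, then substitute into the bounds \eqref{cl1} and \eqref{cl2} of Theorem \ref{thm1}, treating $H=\tfrac12$ (where $s_H(k)=\mathbf{1}_{\{k=0\}}$ makes the bound \eqref{cl2} vanish) and $H=\tfrac34$ (where the sharper form of \eqref{cl1} with denominator $\sim c\log d$ is required) separately. Your identification of the critical exponents and of which term dominates in \eqref{cl2} matches the paper's displayed asymptotics exactly.
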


\begin{obs}\label{rem1.2} 
{\rm
(1) Note that the particular case where $H=1/2$ reduces to the case of full independence and gives us the same high-dimensional regime as in \cite{BDER16,JiangLi13}.   One of the main ingredients in the proof of Theorem \ref{thm1} is the Stein's method developed in the work \cite{NPRev10}, in which the Malliavin calculus was coupled with the multivariate Stein's method
in order to deal with the Wasserstein distance.
This explains why, in the present paper, our bounds are for the Wasserstein distance and not the total variation distance $d_{\rm TV}$ as in \cite{BDER16,BG16,JiangLi13};   see also Section 5.

\noindent{\qquad (2)} When $H=5/8$, the high-dimensional regime looks similar to that derived in \cite{BG16}, that is, assuming the entries $X_{ij}$ are i.i.d log-concave and $( n^3  \log^2 d  )/d\to 0$, one has 
$
  d_{\rm TV}\big( \mathcal{W}_{n,d}, \mathcal{G}_n \big) \to 0$,
where $ \mathcal{G}_n$ belongs to the GOE. The log-terms in both regimes seem to be the price paid for deviating away from being Gaussian or independent,  see also the critical case $H=3/4$.

 }

 \end{obs}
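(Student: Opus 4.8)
The final statement is a remark rather than a theorem, so the task is to justify the two factual assertions it contains; neither requires machinery beyond direct computation with \eqref{frac} together with the bounds already established in Theorem \ref{thm1} and in the display preceding Corollary \ref{cor-fbm}. The plan is therefore to isolate exactly what is being claimed in parts (1) and (2) and to verify each claim by elementary means.

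For part (1), the first step is to confirm the identity $s_{1/2}(k)=\mathbf{1}_{\{k=0\}}$. Substituting $H=1/2$, so that $2H=1$, into \eqref{frac} gives $s_{1/2}(k)=\tfrac12\big(|k+1|+|k-1|-2|k|\big)$, and a one-line case check ($k=0$ yields $1$, while every $|k|\geq 1$ yields $0$) confirms the claim. By \eqref{covariance} the family $\{e_{ij}\}$ is then orthonormal, so the entries $X_{ij}=X(e_{ij})$ are i.i.d.\ standard Gaussian and $\mathcal{X}^{(s_{1/2})}_{n,d}$ coincides in law with the classical Wishart design of \cite{BDER16,JiangLi13}. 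Feeding $s_{1/2}$ into \eqref{cl1}, only the $k=0$ term survives in $\sum_{|k|\leq d}|s(k)|^{4/3}=1$, whence $\phi(n,d)=192\,n^3/d$, i.e.\ exactly the threshold $n^3=o(d)$ of \cite{BDER16,JiangLi13}. The remaining sentence of part (1)---that coupling Malliavin calculus with the multivariate Stein bound of \cite{NPRev10} yields a Wasserstein rather than a total-variation estimate---is a methodological observation about the proof of Theorem \ref{thm1} and carries no separate content to be established.

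For part (2), the plan is to recover the $H=5/8$ asymptotic from the tail behavior $|s_H(k)|\sim c|k|^{2H-2}$, valid for $H\neq 1/2$. Raising to the power $4/3$ gives $|s_H(k)|^{4/3}\sim c|k|^{(8H-8)/3}$, and the critical value is identified by solving $(8H-8)/3=-1$, which gives $H=5/8$; at that value $\sum_{|k|\leq d}|s_H(k)|^{4/3}\sim c\log d$, so after cubing and dividing by $d$ the leading term in \eqref{cl1} is of order $(\log d)^3/d$, producing $\phi(n,d)=n^3(\log d)^3/d$ as recorded in Corollary \ref{cor-fbm}(iv). The comparison with the log-concave regime $n^3(\log d)^2/d$ of \cite{BG16} is then an immediate inspection of the two expressions: both deviate from the clean $n^3=o(d)$ barrier only by a polylogarithmic factor, the $\log d$-exponent $3$ in our bound arising from the cube in \eqref{cl1} while \cite{BG16} report exponent $2$. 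In keeping with the hedged phrasing of the remark (``seem to be the price paid''), I would claim only the qualitative resemblance and not a precise matching of exponents. There is no genuine obstacle here; the one mild subtlety is that the standard asymptotic $|s_H(k)|\sim c|k|^{2H-2}$ for fractional noise must be invoked to locate the critical index and to evaluate the relevant sums.
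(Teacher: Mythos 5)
Your proposal is correct and follows essentially the same route as the paper: verifying $s_{1/2}(k)=\mathbf{1}_{\{k=0\}}$ to reduce to the i.i.d.\ setting of \cite{BDER16,JiangLi13}, and using the tail asymptotic $|s_H(k)|\sim c|k|^{2H-2}$ to locate the critical exponent $H=5/8$ and extract the $(\log d)^3/d$ rate, exactly as in the computations preceding Corollary \ref{cor-fbm}. The only detail worth adding for part (1) is that the identification with the regime of \cite{BDER16,JiangLi13} also uses that the Gaussian target is the GOE itself, which follows at once from \eqref{cl2} since both $\sum_{|k|>d}s(k)^2$ and $d^{-1}\sum_{|k|\leq d}|k|s(k)^2$ vanish when $s=\mathbf{1}_{\{\cdot=0\}}$, so that $\mathcal{G}^{(s_{1/2})}_{n,d}$ and $\mathcal{Z}^{(s_{1/2})}_{n}$ coincide in law.
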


In the next  result, we finally explain what happens in the case $H\in(3/4,1)$. In this case,  an interesting and new phenomenon appears: properly scaled, the Wishart matrix associated with $\mathcal{X}_{n,d}$ given by (\ref{xnd}) and $s=s_H$ given by (\ref{frac}) converges to the so-called \emph{Rosenblatt-Wishart matrix} for a suitable range of $n$ and $d$.

\begin{thm}[Rosenblatt approximation]\label{thm2}
Consider $s_H$ given by \eqref{frac} with $H\in( 3/4,1)$, set
\begin{equation}\label{wnd2}
\wh{\mathcal{W}}^{(s_H)}_{n,d} = d^{2-2H}  \left( \frac1d\mathcal{X}_{n,d}^{(s_H)} (\mathcal{X}_{n,d}^{(s_H)})^T  - \mathcal{I}_n\right) 
\end{equation}
and let $\mathcal{R}^{(H)}_{n}$ be the $n\times n$  Rosenblatt-Wishart matrix with Hurst parameter $H$ {\rm (\emph{see} Definition \ref{defi-rose})}.     Then, there exists a finite constant $c_H>0$ depending only on $H$ such that,
for any $n,d\geq 1$,
\[
 d_{\rm Wass}\big( \wh{\mathcal{W}}^{(s_H)}_{n,d} ,  \mathcal{R}^{(H)}_{n}   \big)
 \leq c_H\,n\,d^{(3-4H)/2}.
\]
In other words,  the scaled Wishart matrix 
$\wh{\mathcal{W}}_{n,d}^{(s_H)}$ is $\phi$-close to  the Rosenblatt-Wishart matrix $\mathcal{R}^{(H)}_{n}$ for $\phi(n,d)=n^2\,d^{3-4H}$.

\end{thm}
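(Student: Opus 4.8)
The plan is to express each entry of the scaled Wishart matrix $\wh{\mathcal{W}}^{(s_H)}_{n,d}$ as a double Wiener--It\^o integral and to prove convergence of the whole family of entries to the corresponding Rosenblatt-Wishart entries by establishing two things simultaneously: convergence of covariances, and control of the ``distance to the limit'' via a quantitative multivariate bound. Concretely, for $1\le i\le j\le n$, the entry $\big(\wh{\mathcal{W}}^{(s_H)}_{n,d}\big)_{ij}$ equals $d^{1-2H}\sum_{k=1}^d X_{ik}X_{jk}$ (minus the diagonal correction), and since $X_{ik}=X(e_{ik})$ with $X$ isonormal, each product $X_{ik}X_{jk}$ is a second-order Hermite-type object whose chaotic decomposition has a second-chaos part $I_2(e_{ik}\,\widetilde\otimes\,e_{jk})$ plus a constant. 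Thus each off-diagonal $W_{ij}$ lives in the second Wiener chaos, and the diagonal centering removes the projection onto the zeroth chaos. The normalization $d^{2-2H}$ rather than $\sqrt d$ is precisely the scaling forcing the variance $\mathrm{Var}\big(d^{1-2H}\sum_k X_{ik}^2\big)\sim c_H$ (using $\sum_{|k|\le d}(1-|k|/d)s_H(k)^2\sim c\,d^{4H-3}$ for $H>3/4$, which is exactly the divergent-in-$\ell^2$ regime excluded from Theorem~\ref{thm1}).

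I would organize the argument around the second-chaos structure. First I would compute the limiting covariance structure of the rescaled entries and identify it with the covariance of the Rosenblatt-Wishart matrix $\mathcal{R}^{(H)}_n$ of Definition~\ref{defi-rose}; this is where the Hurst parameter $H>3/4$ enters decisively, since the relevant sums $\sum_{|k|\le d}(1-|k|/d)s_H(k)^2$ now grow like $d^{4H-3}$ and, after multiplication by $d^{1-2H}\cdot d^{1-2H}=d^{2-4H}$, converge to a finite limit. Second, because the limit object is itself nontrivial (living in the second chaos, \emph{not} Gaussian), the correct comparison tool is a second-chaos-to-second-chaos bound rather than the Gaussian Malliavin--Stein inequality used for Theorem~\ref{thm1}. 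I expect the paper to invoke a quantitative bound in the Wasserstein distance for vectors of double integrals expressed through contraction norms $\|f_{n,d}\otimes_1 g_{n,d}\|_\H$, together with a matching of the kernels: the Wishart kernel $d^{1-2H}\sum_k e_{ik}\,\widetilde\otimes\,e_{jk}$ should converge in $\H^{\odot 2}$ to the Rosenblatt-Wishart kernel, and the residual discrepancy is what produces the rate $n\,d^{(3-4H)/2}$.

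The key steps, in order, are: (1) write the entries as elements of the second Wiener chaos and isolate the centered second-chaos kernels $f^{(i,j)}_{n,d}=d^{1-2H}\sum_{k=1}^d e_{ik}\,\widetilde\otimes\,e_{jk}$; (2) verify $L^2$-convergence of these kernels to the limiting Rosenblatt-Wishart kernels, controlling the error by $O(d^{(3-4H)/2})$ per entry; (3) estimate the contraction norms governing the distance between a vector of second-chaos integrals and its chaotic/Gaussian-mixture limit, summing over the $O(n^2)$ entries to pick up the factor $n$ (after taking a square root of an $n^2$-fold sum, consistent with the claimed $n\,d^{(3-4H)/2}$); (4) assemble these into the Wasserstein bound for the matrix-valued variables, using that a $1$-Lipschitz function of the matrix is controlled entrywise through the Hilbert--Schmidt norm. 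The factor $d^{(3-4H)/2}$ is just $\sqrt{d^{3-4H}}$ and is exactly the square root of the discrepancy between the finite-$d$ variance and its limit, consistent with the $d^{4H-3}$ tail sums tabulated before the corollary.

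The main obstacle I anticipate is step~(3): establishing a clean quantitative comparison between two \emph{non-Gaussian} second-chaos vectors. One cannot simply reuse the Gaussian approximation inequality; instead one needs a stability estimate showing that closeness of kernels in $\H^{\odot 2}$ (and of the associated contractions) forces closeness in Wasserstein distance between the two vectors of iterated integrals. Matching the off-diagonal and diagonal kernels to those of the Rosenblatt-Wishart matrix, while keeping track of the correlation $s_H(j-j')$ structure that couples different columns, is where the delicate bookkeeping lies, and where the precise constant $c_H$ and the power $(3-4H)/2$ must be extracted; everything else reduces to the variance asymptotics already recorded in the displayed table preceding Corollary~\ref{cor-fbm}.
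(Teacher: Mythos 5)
Your overall architecture (bound the Wasserstein distance by an entrywise $L^2$ distance for second-chaos quantities, sum over $O(n^2)$ entries, take a square root to obtain $n\,d^{(3-4H)/2}$) matches the paper's, and your rate bookkeeping is correct; but there is a genuine gap at your step (2), and it sits exactly where the paper's one key idea lives. The kernels you propose, $f^{(i,j)}_{n,d}=d^{1-2H}\sum_{k=1}^d e_{ik}\,\wt{\otimes}\,e_{jk}$ with the $e_{ik}$ of \eqref{covariance}, do \emph{not} converge in $\H^{\otimes 2}$. Indeed, for a diagonal entry,
\[
\big\langle f^{(i,i)}_{n,d},f^{(i,i)}_{n,d'}\big\rangle_{\H^{\otimes 2}} \;=\; (dd')^{1-2H}\sum_{k=1}^{d}\sum_{\ell=1}^{d'}s_H(k-\ell)^2 ,
\]
and using $s_H(k)^2\sim H^2(2H-1)^2|k|^{4H-4}$ one checks that $\big\| f^{(i,i)}_{n,d}\big\|^2_{\H^{\otimes 2}}$ converges to a positive constant $C_H$ while $\big\langle f^{(i,i)}_{n,d},f^{(i,i)}_{n,2d}\big\rangle_{\H^{\otimes 2}}\to 2^{2H-2}C_H$ with $2^{2H-2}<1$; hence the sequence is not Cauchy. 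So there is no ``limiting Rosenblatt--Wishart kernel'' in $\H^{\otimes 2}$ for these kernels to converge to, and the per-entry error $O(d^{(3-4H)/2})$ you want in step (2) cannot be established this way: the convergence of $\wh{\mathcal{W}}^{(s_H)}_{n,d}$ to $\mathcal{R}^{(H)}_{n}$ holds only in law, which is precisely what makes the statement a non-central limit theorem. Your step (3) correctly senses the resulting difficulty (``one cannot simply reuse the Gaussian approximation inequality''), but the contraction-norm ``stability estimate'' you hope for is not an available tool for chaos-to-chaos comparison, and it is not how the paper proceeds.

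The missing idea is a \emph{coupling}, obtained from the self-similarity of fractional Brownian motion. Realize the entries as unit increments of $n$ independent fBms, $X_{ik}=B^i_k-B^i_{k-1}$, which is consistent with \eqref{covariance} and \eqref{frac}; then $H$-self-similarity gives $\wh{\mathcal{W}}^{(s_H)}_{n,d}\overset{\rm law}{=}\mathcal{S}_{n,d}$, where $\mathcal{S}_{n,d}$ in \eqref{snd}--\eqref{other} is built from increments of the \emph{same} fBms over the intervals $[p/d,(p+1)/d]\subset[0,1]$. After this time rescaling, the kernels $f^n_{ij}(d)$ of \eqref{fij} \emph{do} converge in $\H_n^{\otimes 2}$ (Proposition \ref{prop31}), the limiting kernels define $\mathcal{R}^{(H)}_{n}$ on the same probability space, and then trivially
\[
d_{\rm Wass}\big(\mathcal{S}_{n,d},\mathcal{R}^{(H)}_{n}\big)\;\leq\;\Big(\sum_{1\leq i,j\leq n}\E\big[(S_{ij}(d)-R_{ij})^2\big]\Big)^{1/2},
\]
with no Stein-type inequality and no contractions involved. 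The per-entry rate $\E[(S_{ij}(d)-R_{ij})^2]\leq c_H\,d^{3-4H}$ is exactly inequality (17) of \cite{BretonNourdin08}, and summing over the $n^2$ entries yields the theorem. Without the self-similarity coupling, your plan stalls at precisely the point you flagged as the main obstacle.
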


The above theorem addresses the non-central limit theorems in the context of large random matrices and a crucial step in its proof is to construct explicitly a coupling of  $\wh{\mathcal{W}}^{(s_H)}_{n,d}$ and  $\mathcal{R}^{(H)}_{n}  $ using the \emph{self-similarity} of fractional Brownian motion, with which we bound the Wasserstein distance by the $L^2$-distance.

\medskip

The rest of the paper is organized as follows.
In Section 2, we develop all the material needed for the proof of Theorem \ref{thm1}, and we give its proof in the end.
Section 3 is devoted to the proof of Theorem \ref{thm2} as well as the introduction to the new notion of Rosenblatt-Wishart matrix.
In Section 4, we analyze the situation where the row-independence assumption is relaxed and we also look at the setting of random $p$-tensors ($p\geq 3$), a natural extension of Wishart matrices.
Finally, we propose some related open problems for future research in Section 5. \\

\noindent{\bf Acknowledgement.}  We  thank St\'ephane Chr\'etien  for mentioning  the paper \cite{BG16} to one of us (Ivan), which initiated  our investigation.

\section{Gaussian approximation}

The basic tools we use  throughout this work is the Malliavin calculus and  Stein's method, whose combination is commonly known as the Malliavin-Stein approach. Such an approach was motivated to quantify   Nualart and Peccati's fourth moment theorem \cite{FMT}, and it has been extensively developed  by the authors of  \cite{NP09} as well as their collaborators; see \cite{bluebook} for a comprehensive treatment. This Malliavin-Stein approach has turned out to be very applicable in quantifying limit theorems on a Gaussian space. More specifically, we are going to use its multidimensional  version  derived in \cite{NPRev10} to investigate the high-dimensional regime concerning the Gaussian approximation of Wishart matrices.   In Section 2.1, we collect   several basic facts. Section 2.2 is devoted to the proof of Theorem \ref{thm1}. We refer the readers to the monograph \cite{bluebook} for any unexplained notation. 

\subsection{Preparation of the proof of Theorem \ref{thm1}}

Let us first recall the framework put in the introduction: $X=\{X(h),\,h\in \H\}$ is an isonormal process over a real separable Hilbert space $\H$, defined on some probability space $(\Omega,\mathcal{F},\mathbb{P})$.

For every $p\geq 1$, we let $\mathcal{H}_p$ denote the $p$th Wiener chaos of $X$, that is, the
closed linear subspace of $L^2(\Omega)$ generated by the random variables of the form 
$\{H_p(X(h)),\,h\in\H,\,\|h\|_\H=1\}$, where $H_p$ stands for the $p$th Hermite polynomial\footnote{$H_1(x)=x$, $H_2(x)=x^2-1$, $H_3(x)=x^3-3x$ and $H_{p+1}(x) = xH_p(x) - p H_{p-1}(x)$ for every $p\geq 2$.}. The relation that $I_p(h^{\otimes p})= H_p(X(h))$ for unit vector $h\in\H$ can be extended to a linear isometry between the symmetric $p$th tensor product $\H^{\odot p}$ (equipped with the modified norm $\sqrt{p!}\|\cdot\|_{\H^{\otimes p}}$)
and the $p$th Wiener chaos $\mathcal{H}_p$.

Suppose $(h_i, i\geq 1)$ is an orthonormal basis of $\H$, and consider $f\in\H^{\otimes p}$ and $g\in\H^{\otimes q}$ with $p,q\geq 1$. With $ f(i_1,\ldots, i_p) = \langle f,  h_{i_1}\otimes\cdots \otimes h_{i_p} \rangle_{\H^{\otimes p}}$ and $g(i_1,\ldots, i_q) = \langle g,  h_{i_1}\otimes\cdots \otimes h_{i_q} \rangle_{\H^{\otimes q}}$, 
we can express them as
\begin{align}
f =  \sum_{i_1, \ldots, i_p=1}^\infty f(i_1,\ldots, i_p) h_{i_1}\otimes\cdots \otimes h_{i_p}  ~~ \text{and} ~ ~g=  \sum_{i_1, \ldots, i_q=1}^\infty g(i_1,\ldots, i_q) h_{i_1}\otimes\cdots \otimes h_{i_q}  . \label{f-exp}
\end{align}
For $r\in\{1,\ldots, p\wedge q\}$, the $r$-contraction of $f$ and $g$ is the element in $\H^{\otimes p+q-2r}$ defined by 
\begin{align*}
f\otimes_r g =  \sum_{\substack{ i_1, \ldots, i_{p-r}\geq 1  \\ j_1, \ldots, j_{q-r}\geq 1 }}  (f\star_rg)(i_1,\ldots, i_{p-r},j_1,\ldots, j_{q-r} )    h_{i_1}\otimes \cdots \otimes h_{i_{p-r}} \otimes h_{j_1}\otimes \cdots \otimes h_{j_{q-r}}
\end{align*}
where $$(f\star_rg)(i_1,\ldots, i_{p-r},j_1,\ldots, j_{q-r} ) = {\displaystyle \sum_{k_1, \ldots, k_r=1}^\infty  f(k_1,\ldots, k_r, i_1, \ldots,  i_{p-r})    g(k_1,\ldots, k_r, j_1, \ldots,  j_{q-r}) }.$$  
Contractions naturally appear in the product formula for multiple Wiener-It\^o integrals: for $f\in\H^{\odot p}$ and $g\in\H^{\odot q}$ with $p,q\geq 1$, it holds that
\begin{align}\label{form-prod}
 I_p(f) I_q(g) = \sum_{r=0}^{p\wedge q} r! {p\choose r} {q\choose r} I_{p+q-2r}\big( f \wt{\otimes}_r g \big) \,,
\end{align}
where $ f \wt{\otimes}_r g $ stands for the symmetrization of  $f \otimes_r g$; see \emph{e.g.} \cite[Theorem 2.7.10]{bluebook}. 

We will also need the notion of Malliavin derivative $D$ with respect to $X$ but only its action on a fixed Wiener chaos: for $f\in\H^{\odot p}$ of the form \eqref{f-exp}, the Malliavin derivative of $I_p(f)$ is the random  element of $\H$ given by   
\[
DI_p(f) = p \sum_{i=1}^\infty  I_{p-1}(f\otimes_1 h_i) h_i = p \sum_{i_1, \ldots, i_p\geq 1} f(i_1,\ldots, i_p) I_{p-1}(h_{i_2}\otimes\cdots \otimes h_{i_p} )   h_{i_1} \, .
\]

Bearing all this in mind, let us go back to the rectangular matrix $\mathcal{X}_{n,d}$  defined by (\ref{xnd}).
Since $I_1(h)=X(h)$ for all $h\in\H$,
the entries of $\mathcal{X}_{n,d}$ are realized as elements in the first Wiener chaos $\mathcal{H}_1$. As a consequence, due  to either the very definition of $\mathcal{H}_2$ (when $i=j$) or the product formula (when $i\neq j$), the $(i,j)$th entry $W_{ij}$ of $\mathcal{W}^{(s)}_{n,d}$ given by (\ref{wnds}) belongs to the second Wiener chaos $\mathcal{H}_2$: more precisely,
\begin{align}
W_{ij}=\left\{
\begin{array}{cc}
{\displaystyle \frac{1}{\sqrt{d}} \sum_{k=1}^d \big(X_{ik}^2 - 1\big) }&\,\mbox{if $i=j$}\\
\quad\\
{\displaystyle \frac{1}{\sqrt{d}} \sum_{k=1}^d X_{ik}X_{jk} }&\,\mbox{if $i\neq j$}\\
\end{array}
\right\}
= I_2(f_{ij}^{(d)})\,, \label{Wndii}
\end{align}
with the kernel   $$f_{ij}^{(d)} =  \frac{1}{2\sqrt{d}} \sum_{k=1}^d ( e_{ik}\otimes e_{jk} + e_{jk}\otimes e_{ik}).$$

Before we present the proof of Theorem \ref{thm1}, we prepare several important facts on double Wiener-It\^o integrals.

\medskip

{\bf\small Fact 1.} For any $f, g\in\H^{\odot 2}$, on has  $$\langle D I_2(f), D I_2(g) \rangle_\H - \E\big[ \langle D I_2(f), D I_2(g) \rangle_\H\big] = 4 I_2\big( f\wt{\otimes}_1 g \big).$$
This can be verified by using the product formula \eqref{form-prod}.

\medskip

{\bf\small Fact 2.} For kernels $f_{ij}^{(d)}$ given in \eqref{Wndii}, we have $f_{ij}^{(d)} \otimes_1 f_{kl}^{(d)} = 0$, whenever $\{ i,j\} \cap \{ k,l\} =\emptyset$. Here we may abuse the notation $\{ i,j\} = \{i\}$ if $i=j$.  This fact follows from the specific shape of \eqref{covariance}.

\medskip

{\bf\small Fact 3.} For kernels $f_{ij}^{(d)}$ given in \eqref{Wndii},  we can  obtain by following the same computations as in \cite[Page 134-135]{bluebook} that
 \begin{eqnarray*}
 \| f_{ii}^{(d)} \otimes_1 f_{ii}^{(d)} \|^2_{\H^{\otimes 2}} & =&  \left\| \frac{1}{d} \sum_{k,\ell=1}^d \big(e_{ik}\otimes e_{i\ell}\big) s(k-\ell) \right\|^2_{\H^{\otimes 2}}   \\
 & =&  \frac{1}{d^2} \sum_{k,\ell, u, v=1}^d s(k-\ell)s(\ell-u) s(u-v)s(v-k)  
 \leq  \frac{1}{d} \left( \sum_{\vert k \vert \leq d} \vert s(k) \vert^{4/3} \right)^{3} 
 \end{eqnarray*}
  whereas, for $i\neq j$,
    \begin{align}
    \big\| f_{ij}^{(d)} \otimes_1 f_{ij}^{(d)} \big\| ^2_{\H^{\otimes 2}} &=  \left\| \frac{1}{4d} \sum_{k,\ell=1}^d \big( e_{ik}\otimes e_{i\ell} + e_{jk}\otimes e_{j\ell}   \big) s(k-\ell) \right\|^2_{\H^{\otimes 2}}    \notag\\
    & \leq \frac{1}{8} \big\| f_{ii}^{(d)} \otimes_1 f_{ii}^{(d)} \big\|^2_{\H^{\otimes 2}} +  \frac{1}{8} \big\| f_{jj}^{(d)} \otimes_1 f_{jj}^{(d)} \big\|^2_{\H^{\otimes 2}}  \notag \\
  & \leq   \frac{1}{4d} \left( \sum_{\vert k \vert \leq d} \vert s(k) \vert^{4/3} \right)^{3}  \,.    \label{doubledol}
  \end{align}
Moreover, for any $i,j,k,l$, we have 
\[
\big\| f_{ij}^{(d)} \otimes_1  f_{kl}^{(d)} \big\| ^2_{\H^{\otimes 2}}  %= \big\langle  f_{ij}^{(d)} \otimes_1  f_{kl}^{(d)},  f_{ij}^{(d)} \otimes_1  f_{kl}^{(d)} \big\rangle_{\H^{\otimes 2}} 
= \big\langle  f_{ij}^{(d)} \otimes_1  f_{ij}^{(d)},  f_{kl}^{(d)} \otimes_1  f_{kl}^{(d)} \big\rangle_{\H^{\otimes 2}}  \leq  \frac{1}{d} \Big( \sum_{\vert k \vert \leq d} \vert s(k) \vert^{4/3} \Big)^{3}   \,,
\] 
where the  equality above follows from the definition of contractions.

\medskip

{\bf\small Fact 4.} Finally, we state the main ingredient for our proof and   we will only use it with $p_1=\ldots=p_m=1$ (in which case it provides a bound for the Wasserstein distance between two $m$-dimensional Gaussian vectors) and $p_1=\ldots=p_m=2$.

\medskip

\begin{prop}[see Corollary 3.6 in \cite{NPRev10}]  \label{NPR10-thm} Fix integers $m\geq 2$ and $1\leq p_1\leq \ldots \leq p_m$. Consider a vector $F = ( F_1, \ldots, F_m) = \big( I_{p_1}(f_1), \ldots,  I_{p_m}(f_m) \big)$ with $f_j\in\H^{\odot p_j}$ 
for each $j$. On the other hand, let $C$ be an invertible covariance matrix, and let $Z\sim N_m(0, C)$. Then
\[
d_{\rm Wass} ( F, Z ) \leq \| C^{-1} \| _{\rm op} \| C \| _{\rm op}^{1/2} \left(  \sum_{1\leq i, j\leq m} \E\Big[ \big( C_{ij}-p_j^{-1} \langle DF_i, DF_j \rangle_\H    \big)^2\Big] \right)^{1/2} \,,
\]
where $\| \cdot \| _{\rm op}$ denotes the usual operator norm. 
\end{prop}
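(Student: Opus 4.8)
The plan is to establish this bound through the multivariate Malliavin–Stein method, following the scheme behind \cite{NPRev10}. First I would set up the Stein equation for the target law $N_m(0,C)$. Writing $\mathcal{A}$ for the generator of the Ornstein–Uhlenbeck diffusion with invariant measure $N_m(0,C)$, namely $(\mathcal{A}f)(x)=\sum_{i,j}C_{ij}\partial_{ij}f(x)-\sum_i x_i\partial_i f(x)$, and letting $P_t$ denote the associated semigroup $P_tg(x)=\E[g(e^{-t}x+\sqrt{1-e^{-2t}}\,Z)]$, the function
\[
f_g(x)=-\int_0^\infty\big(P_tg(x)-\E[g(Z)]\big)\,dt
\]
solves $\mathcal{A}f_g=g-\E[g(Z)]$. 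The analytic heart of the argument is the regularity estimate: for every $g$ with $\|g\|_{\rm Lip}\le 1$, the solution $f_g$ is smooth and its Hessian obeys $\sup_x\|\mathrm{Hess}\,f_g(x)\|_{\rm HS}\le \|C^{-1}\|_{\rm op}\|C\|_{\rm op}^{1/2}$. I would obtain this by differentiating the semigroup representation twice, transferring the second spatial derivative onto the Gaussian variable $Z$ and integrating by parts (using $\E[\partial_{z_i}H(Z)]=\E[H(Z)(C^{-1}Z)_i]$ for $Z\sim N_m(0,C)$); this produces $\partial_{ij}P_tg(x)=\tfrac{e^{-2t}}{\sqrt{1-e^{-2t}}}\,\E[(\partial_j g)(\cdots)(C^{-1}Z)_i]$, and the bound follows from $\|\nabla g\|\le 1$ together with $\int_0^\infty \tfrac{e^{-2t}}{\sqrt{1-e^{-2t}}}\,dt=1$.

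With the Stein solution in hand, the identity $\E[g(F)]-\E[g(Z)]=\E[(\mathcal{A}f_g)(F)]$ reduces everything to estimating $\E\big[\sum_{i,j}C_{ij}\partial_{ij}f_g(F)-\sum_i F_i\partial_i f_g(F)\big]$. Here I would bring in the Malliavin calculus. Since each $F_i=I_{p_i}(f_i)$ lives in the $p_i$th chaos, one has $-L_{\rm OU}^{-1}F_i=p_i^{-1}F_i$, where $L_{\rm OU}$ is the Wiener-space Ornstein–Uhlenbeck operator, and the integration-by-parts formula $\E[F_i\,G]=\E[\langle D(-L_{\rm OU}^{-1}F_i),DG\rangle_\H]$ applied with $G=\partial_i f_g(F)$ and the chain rule $D(\partial_i f_g(F))=\sum_j\partial_{ij}f_g(F)\,DF_j$ yield
\[
\E[F_i\,\partial_i f_g(F)]=p_i^{-1}\sum_j\E\big[\partial_{ij}f_g(F)\,\langle DF_i,DF_j\rangle_\H\big].
\]
Substituting this back gives $\E[g(F)]-\E[g(Z)]=\sum_{i,j}\E\big[\partial_{ij}f_g(F)\,(C_{ij}-p_i^{-1}\langle DF_i,DF_j\rangle_\H)\big]$, and because both $\mathrm{Hess}\,f_g$ and $\langle DF_i,DF_j\rangle_\H$ are symmetric in $(i,j)$, I may relabel $i\leftrightarrow j$ to replace $p_i^{-1}$ by $p_j^{-1}$, matching the statement.

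To finish, I would apply the Cauchy–Schwarz inequality twice — once over the pair index $(i,j)$ and once in expectation — to get
\[
\big|\E[g(F)]-\E[g(Z)]\big|\le \Big(\E\|\mathrm{Hess}\,f_g(F)\|_{\rm HS}^2\Big)^{1/2}\Big(\sum_{i,j}\E\big[(C_{ij}-p_j^{-1}\langle DF_i,DF_j\rangle_\H)^2\big]\Big)^{1/2}.
\]
Bounding the first factor by the Hessian estimate $\|C^{-1}\|_{\rm op}\|C\|_{\rm op}^{1/2}$ and taking the supremum over all $g$ with $\|g\|_{\rm Lip}\le1$ then yields the claimed Wasserstein bound.

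The step I expect to be the main obstacle is the Hessian estimate for the Stein solution, i.e. controlling $\sup_x\|\mathrm{Hess}\,f_g(x)\|_{\rm HS}$ by $\|C^{-1}\|_{\rm op}\|C\|_{\rm op}^{1/2}$ for merely Lipschitz $g$; this is where the invertibility of $C$ and the simultaneous appearance of $\|C^{-1}\|_{\rm op}$ and $\|C\|_{\rm op}^{1/2}$ enter, and it hinges on the smoothing property of the semigroup coupled with a careful Gaussian integration by parts. A secondary technical point is justifying the Malliavin integration by parts: one must check that $\partial_i f_g(F)\in\mathbb{D}^{1,2}$, which is legitimate because $f_g$ has bounded derivatives of all needed orders and each $F_i$, belonging to a fixed Wiener chaos, lies in every $\mathbb{D}^{k,p}$. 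If one prefers to start from a $C^2$ approximation of $g$, a routine density argument then upgrades the estimate to all Lipschitz test functions.
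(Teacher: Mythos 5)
Note first that the paper does not actually prove Proposition \ref{NPR10-thm}: it is imported verbatim as Corollary 3.6 of \cite{NPRev10}, so your attempt has to be measured against the proof given there. Your overall architecture matches it: Stein equation for $N_m(0,C)$, a Hessian bound for its solution, Malliavin integration by parts, and a final Cauchy--Schwarz in the Hilbert--Schmidt inner product. The Malliavin half of your argument is correct: using $-L_{\rm OU}^{-1}F_i=p_i^{-1}F_i$, the chain rule, and the symmetry of $(i,j)\mapsto\langle DF_i,DF_j\rangle_\H$ to trade $p_i^{-1}$ for $p_j^{-1}$ is exactly what is needed, and your closing density argument for merely Lipschitz $g$ is the standard one.

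The genuine gap is at the step you yourself flag as the main obstacle, and the justification you give for it would fail. From your (correct) representation
\begin{equation*}
\partial_{ij}P_tg(x)=\frac{e^{-2t}}{\sqrt{1-e^{-2t}}}\,\E\Big[\partial_j g\big(e^{-t}x+\sqrt{1-e^{-2t}}\,Z\big)\,(C^{-1}Z)_i\Big],
\end{equation*}
the ingredients ``$\|\nabla g\|\leq 1$ together with $\int_0^\infty e^{-2t}(1-e^{-2t})^{-1/2}\,dt=1$'' only yield, via the natural estimate $\|\E[a\otimes b]\|_{\rm HS}\leq\E[\|a\|\,\|b\|]$,
\begin{equation*}
\big\|\mathrm{Hess}\,P_tg(x)\big\|_{\rm HS}\leq\frac{e^{-2t}}{\sqrt{1-e^{-2t}}}\,\E\big[\|C^{-1}Z\|\big]\leq\frac{e^{-2t}}{\sqrt{1-e^{-2t}}}\sqrt{\mathrm{Tr}(C^{-1})}\,,
\end{equation*}
and $\sqrt{\mathrm{Tr}(C^{-1})}$ is neither dimension-free nor dominated by $\|C^{-1}\|_{\rm op}\|C\|_{\rm op}^{1/2}$: taking $C$ to be the $m\times m$ identity, the left side is $\sqrt{m}$ while the right side is $1$. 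Tellingly, the factor $\|C\|_{\rm op}^{1/2}$ never enters your computation at all. Two ways to close the gap. (i) Bound $M:=\E[\nabla g(Y)\otimes C^{-1}Z]$ by duality:
\begin{equation*}
\|M\|_{\rm HS}=\sup_{\|A\|_{\rm HS}\leq1}\E\big[\langle\nabla g(Y),AC^{-1}Z\rangle\big]\leq\sup_{\|A\|_{\rm HS}\leq1}\sqrt{\E\|AC^{-1}Z\|^2}=\sup_{\|A\|_{\rm HS}\leq1}\sqrt{\mathrm{Tr}(AC^{-1}A^T)}\leq\|C^{-1}\|_{\rm op}^{1/2}\,,
\end{equation*}
which suffices because $\|C^{-1}\|_{\rm op}^{1/2}=\|C^{-1}\|_{\rm op}\lambda_{\min}(C)^{1/2}\leq\|C^{-1}\|_{\rm op}\|C\|_{\rm op}^{1/2}$. (ii) Follow \cite{NPRev10}: change variables $x\mapsto C^{-1/2}x$ so as to solve the \emph{standard}-Gaussian Stein equation for $h=g(C^{1/2}\,\cdot\,)$, whose Lipschitz constant is at most $\|C\|_{\rm op}^{1/2}$; prove the dimension-free Hessian bound $\sup_y\|\mathrm{Hess}\tilde f_h(y)\|_{\rm HS}\leq\|h\|_{\rm Lip}$ there (again by the duality above, now with identity covariance); and undo the change of variables, which conjugates the Hessian by $C^{-1/2}$ and costs a factor $\|C^{-1/2}\|_{\rm op}^2=\|C^{-1}\|_{\rm op}$. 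This is precisely where the product $\|C^{-1}\|_{\rm op}\|C\|_{\rm op}^{1/2}$ in the statement comes from. With either repair inserted, the rest of your proof goes through.
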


\subsection{Proof of Theorem \ref{thm1}}

We are now ready to give the proof of Theorem \ref{thm1}.
It is divided into several steps.

\medskip

\underline{\it Step 1}: {\it passing from symmetric matrices to vectors}.
Since the entries of $\mathcal{W}_{n,d}^{(s)}$ are double Wiener-It\^o integrals, we would like to apply Proposition \ref{NPR10-thm}. But Proposition \ref{NPR10-thm} is stated for vectors, not for matrices. So, as a  first step, we need to explain how we can reduce to vectors.
If $\mathcal{Z}=(Z_{ij})_{1\leq i,j\leq n}$ is a $n\times n$ random {\it symmetric} matrix,
the notation $\mathcal{Z}^{\rm half} $ indicates the $n(n+1)/2$-dimensional random vector formed by the upper-triangular entries, namely:
\begin{equation}\label{half}
\mathcal{Z}^{\rm half} = ( Z_{11}, Z_{12}, \ldots, Z_{1n}, Z_{22}, Z_{23}, \ldots, Z_{2n},\ldots, Z_{nn}  )^T.
\end{equation}

 \medskip

\begin{lemma}\label{22}
Let $\mathcal{X}$ and $\mathcal{Y}$ be two \emph{symmetric} random matrices of $\mathcal{M}_n(\R)$.
Then
$$
d_{\rm Wass}(\mathcal{X},\mathcal{Y})\leq \sqrt{2}\,d_{\rm Wass}(\mathcal{X}^{\rm half},\mathcal{Y}^{\rm half}).
$$
Here $d_{\rm Wass}(\mathcal{X},\mathcal{Y})$ is defined according to \eqref{dwass}, whereas $d_{\rm Wass}(\mathcal{X}^{\rm half},\mathcal{Y}^{\rm half})$ stands for the Wasserstein distance between random variables with values  in $\R^{n(n+1)/2}$, that is,
$$
d_{\rm Wass}(\mathcal{X}^{\rm half},\mathcal{Y}^{\rm half})=\sup\big\{  \E[g(\mathcal{X}^{\rm half})] -  \E[g(\mathcal{Y}^{\rm half})] \,:\, \| g\| _{\rm Lip} \leq 1 \,\big\} \, ,
$$
where the $\| g\| _{\rm Lip} $ stands for the usual Lipschitz constant of  a function $g:\R^{n(n+1)/2}\to\R$.
\end{lemma}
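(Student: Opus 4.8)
The plan is to transfer any $1$-Lipschitz test function $g$ on $(\mathcal{M}_n(\R),\|\cdot\|_{\rm HS})$ to a test function on $\R^{n(n+1)/2}$ by precomposing it with the natural \emph{symmetrization map}. Concretely, I would let $\Phi:\R^{n(n+1)/2}\to\mathcal{M}_n(\R)$ be the linear map sending a vector $v$ to the symmetric matrix whose upper-triangular entries, read in the order of \eqref{half}, are the coordinates of $v$; thus $\Phi$ is a left inverse of the $(\cdot)^{\rm half}$ operation, $\Phi(A^{\rm half})=A$ for every symmetric $A$, and $(\Phi(v))^{\rm half}=v$. Since $\mathcal{X}$ and $\mathcal{Y}$ take values in the symmetric matrices, one has $g(\mathcal{X})=(g\circ\Phi)(\mathcal{X}^{\rm half})$ and $g(\mathcal{Y})=(g\circ\Phi)(\mathcal{Y}^{\rm half})$ almost surely, so that $\E[g(\mathcal{X})]-\E[g(\mathcal{Y})]=\E[\widetilde g(\mathcal{X}^{\rm half})]-\E[\widetilde g(\mathcal{Y}^{\rm half})]$ with $\widetilde g:=g\circ\Phi$.

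The key step is then to control the Lipschitz constant of $\widetilde g$ on $\R^{n(n+1)/2}$, and for this I would use the elementary norm comparison specific to symmetric matrices. For a symmetric $A$ one computes
\begin{equation*}
\|A\|_{\rm HS}^2=\sum_{i}A_{ii}^2+2\sum_{i<j}A_{ij}^2\leq 2\Big(\sum_i A_{ii}^2+\sum_{i<j}A_{ij}^2\Big)=2\,\|A^{\rm half}\|^2,
\end{equation*}
where $\|\cdot\|$ denotes the Euclidean norm on $\R^{n(n+1)/2}$; hence $\|A\|_{\rm HS}\leq\sqrt{2}\,\|A^{\rm half}\|$ for every symmetric matrix $A$. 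The factor $\sqrt2$ records precisely that each off-diagonal entry is counted twice in $\|\cdot\|_{\rm HS}^2$ but only once in the half-vector.

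Applying this to the symmetric matrix $\Phi(v)-\Phi(w)=\Phi(v-w)$, whose half-vector equals $v-w$, gives for all $v,w\in\R^{n(n+1)/2}$
\begin{equation*}
|\widetilde g(v)-\widetilde g(w)|=|g(\Phi(v))-g(\Phi(w))|\leq \|g\|_{\rm Lip}\,\|\Phi(v-w)\|_{\rm HS}\leq \sqrt{2}\,\|g\|_{\rm Lip}\,\|v-w\|,
\end{equation*}
so that $\|\widetilde g\|_{\rm Lip}\leq\sqrt{2}$ whenever $\|g\|_{\rm Lip}\leq 1$, i.e. $\widetilde g/\sqrt2$ is $1$-Lipschitz on $\R^{n(n+1)/2}$. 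I would conclude by writing
\begin{equation*}
\E[g(\mathcal{X})]-\E[g(\mathcal{Y})]=\sqrt{2}\,\Big(\E\big[\tfrac{\widetilde g}{\sqrt 2}(\mathcal{X}^{\rm half})\big]-\E\big[\tfrac{\widetilde g}{\sqrt 2}(\mathcal{Y}^{\rm half})\big]\Big)\leq \sqrt{2}\,d_{\rm Wass}(\mathcal{X}^{\rm half},\mathcal{Y}^{\rm half}),
\end{equation*}
and then taking the supremum over all $1$-Lipschitz $g$ on $(\mathcal{M}_n(\R),\|\cdot\|_{\rm HS})$.

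There is essentially no analytic difficulty in this argument: it is a change-of-variables/bookkeeping statement, and the only point requiring genuine care is the numerical constant. The appearance of $\sqrt2$ rather than $1$ is forced by the norm comparison above and is sharp at that level (it is attained by a purely off-diagonal matrix with a single nonzero pair $A_{ij}=A_{ji}$), so the main thing to get right is simply to track this factor faithfully through the symmetrization map.
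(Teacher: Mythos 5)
Your proof is correct and follows essentially the same route as the paper's: both construct the linear inverse of the half-vector map (your $\Phi$ is the paper's $x\mapsto M_x$), use the norm comparison $\|A\|_{\rm HS}\leq\sqrt{2}\,\|A^{\rm half}\|$ for symmetric $A$ to bound the Lipschitz constant of the pulled-back test function, and conclude by rescaling by $1/\sqrt{2}$ and taking a supremum. Your added remark on the sharpness of the constant $\sqrt{2}$ is a pleasant bonus but does not change the substance.
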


\noindent
{\it Proof}. If $x\in\R^{n(n+1)/2}$, we define $M_x$ to be the $n\times n$ symmetric  matrix such that
$M_x^{\rm half}=x$.
Let $g:\mathcal{M}_n(\R)\to\R$ be $1$-Lipschitz with respect to the Hilbert-Schmidt norm.
We have
$$
\big|\E[g(\mathcal{X})]-\E[g(\mathcal{Y})]\big| = \sqrt{2}~\big| \E[\widetilde{g}(\mathcal{X}^{\rm half})]-
 \E[\widetilde{g}(\mathcal{Y}^{\rm half})]\big|,
$$
where $\widetilde{g}:\R^{n(n+1)/2}  \to\R$ is defined by $\widetilde{g}(x)=\frac{1}{\sqrt{2}}g(M_x)$. Since 
$$
\big|\widetilde{g}(x)-\widetilde{g}(y)\big|=\frac{1}{\sqrt{2}} ~ \big|g(M_x)-g(M_y)\big|
\leq\frac{1}{\sqrt{2}}\|M_x-M_y\|_{\rm HS}\leq \|x-y\|,
$$
we deduce that $
\big|\E[g(\mathcal{X})]-\E[g(\mathcal{Y})]\big| \leq \sqrt{2}\,d_{\rm Wass}(\mathcal{X}^{\rm half},\mathcal{Y}^{\rm half})$,
thus concluding the proof by taking the supremum over $g$.
\qed

\medskip

\underline{\it Step 2}: {\it estimating the operator norm}. Let us now look at the common covariance matrix $C$   of $(\mathcal{W}_{n,d}^{(s)})^{\rm half}$ and $(\mathcal{G}^{(s)}_{n,d})^{\rm half}$.
It is diagonal with entries given by   
\begin{align}\label{diags}
  \E\big[  W_{ii}^2 \big]   =  \frac{2}{d} \sum_{k,\ell=1}^d s(k-\ell)^2 \,\,\, \text{for each $i$}
\,\,\,  \text{and}   \,\,\,
  \E\big[  W_{ij}^2 \big]   = \frac{1}{d} \sum_{k,\ell=1}^d s(k-\ell)^2  \,\,\, \text{for  $i < j$.}
\end{align}
It follows immediately that 
$$\| C \|_{\rm op}^{1/2} \| C^{-1} \|_{\rm op}=   \sqrt{\frac{2d}{ \sum_{k,\ell=1}^d s(k-\ell)^2  }} =\sqrt{\frac{2}{ \sum_{|j|\leq d} (1 - \frac{|j|}{d} ) s(j)^2  }}.
$$ 

\medskip

\underline{\it Step 3}: {\it estimating the variance of $\langle DW_{ij}, DW_{k\ell} \rangle_\H$}.
The entries $W_{ij} = I_2(f_{ij}^{(d)})$ being elements of second Wiener chaos, see (\ref{Wndii}), by {\bf\small Fact 1} and isometry relation for multiple integrals we have
\begin{equation}\label{nonzero}
\Var\Big(  \frac{1}{2} \langle DW_{ij}, DW_{k\ell} \rangle_\H \Big) =   8 \big\| f_{ij}^{(d)} \otimes_1 f_{k\ell}^{(d)} \big\| ^2_{\H^{\otimes 2}} \leq \frac{8}{d} \left( \sum_{\vert k \vert \leq d} \vert s(k) \vert^{4/3} \right)^{3}  \,,
\end{equation}
where the last inequality follows from {\bf\small Fact 3}. Moreover, if $\{ i,j\} \cap \{ k,\ell\} =\emptyset$,  {\bf\small Fact 2} implies
\begin{equation}\label{zero}
\Var\Big(  \frac{1}{2} \langle DW_{ij}, DW_{k\ell} \rangle_\H \Big)=0.
\end{equation}

\underline{\it Step 4}: {\it proving \eqref{cl1}}.
Proposition \ref{NPR10-thm} (with $m=n(n+1)/2$ and $p_1=\ldots=p_m=2$) together with the conclusion of Step 2 give us the following bound
$$
d_{\rm Wass}\big( (\mathcal{W}^{(s)}_{n,d})^{\rm half}   ,  (\mathcal{G}^{(s)}_{n,d})^{\rm half}  \big) \leq \sqrt{\frac{2}{ \sum_{|j|\leq d} (1 - \frac{|j|}{d} ) s(j)^2  }}\left(  \sum_{\substack{1\leq i \leq j \leq n \\  1\leq k \leq \ell \leq n}}  \Var\Big( \frac{1}{2} \langle DW_{ij}, DW_{k\ell} \rangle_\H \Big)    \right)^{1/2}.
$$
We deduce from (\ref{zero}) that the sum $\sum_{\substack{1\leq i \leq j \leq n \\  1\leq k \leq \ell \leq n}} $ in the previous inequality can be replaced by 
$ \sum_{(i,j,k,\ell)\in\mathcal{I}} 
$,
where  the set $\mathcal{I}: = \big\{ (i,j,k,\ell)\in\{1,\ldots, n\}^4:$ $i,j,k,\ell$ are not mutually distinct$\big\}$ has the cardinality $n^4-n(n-1)(n-2)(n-3)$, a quantity bounded by $6n^3$.
Considering also the bound (\ref{nonzero}), we finally get 
$$
d_{\rm Wass}\big( (\mathcal{W}^{(s)}_{n,d})^{\rm half}   ,  (\mathcal{G}^{(s)}_{n,d})^{\rm half}  \big) \leq \sqrt{\frac{96}{ \sum_{|j|\leq d} (1 - \frac{|j|}{d} ) s(j)^2  }\times \frac{n^3}d \left(\sum_{\vert k \vert \leq d} \vert s(k) \vert^{4/3} \right)^{3}  }.
$$
which, thanks to Lemma \ref{22}, gives exactly (\ref{cl1}). 

\bigskip

\underline{\it Step 5}: {\it proving \eqref{cl2}}. If $C$ denotes this time the covariance matrix
of $(\mathcal{Z}_n^{(s)})^{\rm half}$ (that is, $C$ is diagonal with diagonal entries either equal to
$2\|s\|^2_{\ell^2(\Z)}$ or $\|s\|^2_{\ell^2(\Z)}$), we have
$\|C\|_{\rm op}^{1/2}= \sqrt{2}\|s\|_{\ell^2(\Z)}$ and $\|C^{-1}\|_{\rm op}= \|s\|_{\ell^2(\Z)}^{-2}$.
We deduce, according to Proposition \ref{NPR10-thm} with $m=n(n+1)/2$ and $p_1=\ldots=p_m=1$, that
\begin{eqnarray*}
d_{\rm Wass}\big(  (\mathcal{G}^{(s)}_{n,d})^{\rm half}, (\mathcal{Z}^{(s)}_{n})^{\rm half}    \big)
\leq 
\frac{\sqrt{2n(n+1)}}{\|s\|_{\ell^2(\Z)}}  \left(  \sum_{|j|>d}s(j)^2+\frac1d\sum_{\vert j\vert \leq d}|j|s(j)^2 \right)\,.
\end{eqnarray*}
Relying on Lemma \ref{22} again, we obtain (\ref{cl2}).  
\qed

 \section{Rosenblatt approximation}

\subsection{Preliminaries on fractional Brownian motion}\label{sec-fBm}
We consider in this section a $n$-dimensional fractional Brownian motion with Hurst parameter $H\in (0,1)$, that is, a centered Gaussian process 
$
B=\{B_t=(B^1_t,\ldots,B^n_t);\,t\in\R_+\}
$,
where $B^1,\ldots,B^n$ are $n$ independent copies of a real fractional Brownian motion  with
covariance function
$
R_H(s,t)=\frac{1}{2}\big(s^{2H}+t^{2H}-|t-s|^{2H}\big)
$,
for $i=1,\ldots,n$.

The two following fundamental properties of the fractional Brownian motion 
will be used throughout the sequel: 
\begin{itemize}
\item 
it is $H$-{\it selfsimilar}, that is, $(B_{ct})_{t\geq 0}\overset{\rm law}{=}c^H(B_t)_{t\geq 0}$ for all $c>0$; 
\item it has {\it stationary increments}, that is,
$(B_{t+h}-B_h)_{t\geq 0}\overset{\rm law}{=}(B_t)_{t\geq 0}$ for all $h>0$.
\end{itemize}
We will also need a few facts about its Gaussian structure:
let $\mathcal{E}_n$ be the set of step-functions on $\R_+$ with values in $\R^n$ and 
consider the Hilbert space $\H_n$ defined as the closure of $\mathcal{E}_n$ with respect to the scalar product induced by
\begin{equation}\label{scalar-fbm}
\big\langle ({\bf 1}_{[0,s_1]},\ldots,{\bf 1}_{[0,s_n]}), 
({\bf 1}_{[0,t_1]},\ldots,{\bf 1}_{[0,t_n]})
\big\rangle_{\H_n}=\sum_{i=1}^n R_H(s_i,t_i).
\end{equation}
Then the mapping
$
({\bf 1}_{[0,t_1]},\ldots,{\bf 1}_{[0,t_n]})\in \mathcal{E}_n \mapsto \sum_{i=1}^n B^i_{t_i}  
$
can be extended to an isometry between $\H_n$ and the Gaussian 
space associated with $B=(B^1,\ldots,B^n)$. 
We denote this isometry by $\varphi\mapsto B(\varphi)$ and
the process $\{B(\varphi):\,\varphi\in\H_n\}$ is an isonormal Gaussian process by construction.

Eventually, for $b>a\geq 0$ and $i\in\{1,\ldots, n\}$, we will use the short-hand notation 
\begin{equation}\label{notation}
{\bf 1}^{i,n}_{[a,b]}:=(0,\ldots,0,{\bf 1}_{[a,b]},0,\ldots,0),
\end{equation}
where the indicator function ${\bf 1}_{[a,b]}$ is located in the $i$th position.

   \subsection{Rosenblatt-Wishart matrix}

In   this section,
we fix $n\geq 1$ and we let 
$\H_n$ denote the Hilbert space constructed in Section \ref{sec-fBm}, whose scalar product is defined by \eqref{scalar-fbm}.

\begin{prop}\label{prop31}
Fix $1\leq i,j\leq n$ and with the notation \eqref{notation}, consider
\begin{equation}\label{fij}
f^n_{ij}(d) =\frac{d}{2}\,\sum_{p=0}^{d-1} 
\left\{
{\bf 1}^{i,n}_{[\frac{p}{d},\frac{p+1}d]}\otimes {\bf 1}^{j,n}_{[\frac{p}{d},\frac{p+1}d]}
+
{\bf 1}^{j,n}_{[\frac{p}{d},\frac{p+1}d]}\otimes {\bf 1}^{i,n}_{[\frac{p}{d},\frac{p+1}d]}
\right\},\quad d\geq 1.
\end{equation}
Then $\{f^n_{ij}(d)\}_{d\geq 1}$ is a Cauchy sequence in $\H_n^{\otimes 2}$.
\end{prop}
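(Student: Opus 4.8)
The plan is to exploit the completeness of $\H_n^{\otimes 2}$ and reduce the Cauchy property to the convergence of a single scalar sequence of inner products. Writing $\delta^i_p(d):={\bf 1}^{i,n}_{[p/d,(p+1)/d]}$ for brevity, I would first record that, since we are in the Rosenblatt regime $H\in(3/4,1)$ (so in particular $H>1/2$), the increment covariance of a real fractional Brownian motion admits the integral representation
\begin{equation*}
\rho_{p,q}(d,m):=\big\langle \delta^i_p(d),\delta^i_q(m)\big\rangle_{\H_n}=\alpha_H\int_{p/d}^{(p+1)/d}\int_{q/m}^{(q+1)/m}|u-v|^{2H-2}\,du\,dv,\qquad \alpha_H:=H(2H-1).
\end{equation*}
By the defining scalar product \eqref{scalar-fbm} and the notation \eqref{notation}, one has $\langle \delta^i_p(d),\delta^{i'}_q(m)\rangle_{\H_n}=\mathbf 1_{\{i=i'\}}\,\rho_{p,q}(d,m)$, so the scalar products pairing index $i$ with an index $j\neq i$ vanish identically.

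Expanding the tensor inner product of the kernels \eqref{fij} and using this orthogonality, I expect to obtain, for every $d,m\geq 1$,
\begin{equation*}
\big\langle f^n_{ij}(d),f^n_{ij}(m)\big\rangle_{\H_n^{\otimes 2}}=\kappa_{ij}\,dm\sum_{p=0}^{d-1}\sum_{q=0}^{m-1}\rho_{p,q}(d,m)^2,\qquad \kappa_{ij}=\tfrac12\mathbf 1_{\{i\neq j\}}+\mathbf 1_{\{i=j\}},
\end{equation*}
the swapped cross terms $\langle \delta^i_p\otimes\delta^j_p,\delta^j_q\otimes\delta^i_q\rangle$ dropping out when $i\neq j$. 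The proposition then follows once I show that this quantity converges, as $\min(d,m)\to\infty$, to a finite limit $L_{ij}$ that does not depend on the manner in which $d,m$ diverge: indeed, $\|f^n_{ij}(d)-f^n_{ij}(m)\|^2=\|f^n_{ij}(d)\|^2+\|f^n_{ij}(m)\|^2-2\langle f^n_{ij}(d),f^n_{ij}(m)\rangle$, and letting $d,m\to\infty$ the right-hand side tends to $L_{ij}+L_{ij}-2L_{ij}=0$.

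To identify this limit I would write $\rho_{p,q}(d,m)=\alpha_H(dm)^{-1}\bar\phi_{p,q}$, where $\bar\phi_{p,q}$ denotes the average of $\phi(u,v):=|u-v|^{2H-2}$ over the cell $[p/d,(p+1)/d]\times[q/m,(q+1)/m]$, so that
\begin{equation*}
dm\sum_{p,q}\rho_{p,q}(d,m)^2=\alpha_H^2\,\frac{1}{dm}\sum_{p,q}\bar\phi_{p,q}^{\,2},
\end{equation*}
which is exactly the integral over $[0,1]^2$ of the step function taking the value $\bar\phi_{p,q}^{\,2}$ on each cell. Because $H>3/4$ gives $4H-4>-1$, the function $\phi^2=|u-v|^{4H-4}$ is integrable on $[0,1]^2$, and the target value is $\alpha_H^2\int_0^1\int_0^1|u-v|^{4H-4}\,du\,dv=\alpha_H^2\int_{-1}^1(1-|w|)|w|^{4H-4}\,dw<\infty$.

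The main obstacle is to justify this Riemann-sum convergence uniformly in the aspect ratio $m/d$, the difficulty being concentrated near the diagonal $u=v$ where $\phi^2$ blows up. I would handle it by splitting, for a fixed $\delta>0$, the cells into those meeting $\{|u-v|\leq\delta\}$ and the rest. On the far region $\phi$ is bounded and uniformly continuous, so $\bar\phi_{p,q}\to\phi$ and the corresponding partial sums converge to $\int_{\{|u-v|>\delta\}}\phi^2$. For the near-diagonal cells, Jensen's inequality gives $\bar\phi_{p,q}^{\,2}\leq\overline{\phi^2}_{p,q}$, so their total contribution is at most $\int_{\{|u-v|\leq 2\delta\}}\phi^2\,du\,dv$ once the cells are small enough; this bound is uniform in $d,m$ and tends to $0$ as $\delta\to0$ precisely because $\phi^2\in L^1([0,1]^2)$, i.e. because $H>3/4$. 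Letting first $\min(d,m)\to\infty$ and then $\delta\to0$ yields $L_{ij}=\kappa_{ij}\,\alpha_H^2\int_{-1}^1(1-|w|)|w|^{4H-4}\,dw$ and hence the Cauchy property. As a consistency check, specializing to $m=d$ recovers $\rho_{p,q}(d,d)=d^{-2H}s_H(p-q)$ and $\|f^n_{ij}(d)\|^2=\kappa_{ij}\,d^{2-4H}\sum_{|k|<d}(d-|k|)s_H(k)^2$, which converges exactly when $H>3/4$ by the asymptotics $s_H(k)^2\sim\alpha_H^2|k|^{4H-4}$, confirming that the choice of scaling in \eqref{fij} is the right one.
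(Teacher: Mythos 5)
Your proof is correct and follows essentially the same route as the paper's: expand $\langle f^n_{ij}(d), f^n_{ij}(d')\rangle_{\H_n^{\otimes 2}}$ using component-wise orthogonality to reduce (up to the factor $\kappa_{ij}$) to a Riemann-type sum converging to $H^2(2H-1)^2\int_{[0,1]^2}|u-v|^{4H-4}\,du\,dv$, and deduce the Cauchy property by polarization. The only difference is that the paper merely asserts the Riemann-sum convergence \eqref{riemann}, whereas you justify it via Jensen's inequality and a near-diagonal/far-from-diagonal splitting, which is a welcome addition since the integrability of $|u-v|^{4H-4}$ (i.e.\ $H>3/4$) is exactly what makes that step work uniformly in $d,d'$.
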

\noindent{\it Proof}. We first observe that, as $d,d'\to\infty$,
\begin{equation}\label{riemann}
\frac{1}{dd'}\sum_{p,q=0}^{d,d'}\left(dd'\int_{p/d}^{(p+1)/d}du\int_{q/d}^{(q+1)/d}
dv|u-v|^{2H-2}\right)^2\to \int_{[0,1]^2}|u-v|^{4H-4}dudv.
\end{equation}
Now, let us compute $\langle f^n_{ij}(d),f^n_{ij}(d')\rangle_{\H^{\otimes 2}_n}$ for $d,d'\geq 1$:
\begin{eqnarray*}
\langle f^n_{ij}(d),f^n_{ij}(d')\rangle_{\H^{\otimes 2}_n}& =&\frac{dd'}{4}\sum_{p,q=0}^{d,d'}  \big\langle {\bf 1}^{i,n}_{[\frac{p}{d},\frac{p+1}d]}\otimes {\bf 1}^{j,n}_{[\frac{p}{d},\frac{p+1}d]},  {\bf 1}^{i,n}_{[\frac{q}{d'},\frac{q+1}{d'}]}\otimes {\bf 1}^{j,n}_{[\frac{q}{d'},\frac{q+1}{d'}]} \big\rangle_{\H^{\otimes 2}_n}\\
&+&\frac{dd'}{4}\sum_{p,q=0}^{d,d'}
\big\langle {\bf 1}^{i,n}_{[\frac{p}{d},\frac{p+1}d]}\otimes {\bf 1}^{j,n}_{[\frac{p}{d},\frac{p+1}d]}, {\bf 1}^{j,n}_{[\frac{q}{d'},\frac{q+1}{d'}]}\otimes {\bf 1}^{i,n}_{[\frac{q}{d'},\frac{q+1}{d'}]}  \big\rangle_{\H^{\otimes 2}_n}\\
&+&\frac{dd'}{4}\sum_{p,q=0}^{d,d'} \big\langle {\bf 1}^{j,n}_{[\frac{p}{d},\frac{p+1}d]}\otimes {\bf 1}^{i,n}_{[\frac{p}{d},\frac{p+1}d]},  {\bf 1}^{i,n}_{[\frac{q}{d'},\frac{q+1}{d'}]}\otimes {\bf 1}^{j,n}_{[\frac{q}{d'},\frac{q+1}{d'}]} \big\rangle_{\H^{\otimes 2}_n}\\
&+&\frac{dd'}{4}\sum_{p,q=0}^{d,d'} \big\langle {\bf 1}^{j,n}_{[\frac{p}{d},\frac{p+1}d]}\otimes {\bf 1}^{i,n}_{[\frac{p}{d},\frac{p+1}d]}, {\bf 1}^{j,n}_{[\frac{q}{d'},\frac{q+1}{d'}]}\otimes {\bf 1}^{i,n}_{[\frac{q}{d'},\frac{q+1}{d'}]} \big\rangle_{\H^{\otimes 2}_n}.
\end{eqnarray*}
We deduce
\begin{eqnarray*}
&&\langle f^n_{ij}(d),f^n_{ij}(d')\rangle_{\H^{\otimes 2}_n}\\ &=&\big(\frac12{\bf 1}_{\{i\neq j\}}+ {\bf 1}_{\{i=j\}}\big) H^2(2H-1)^2\frac{1}{dd'}\sum_{p,q=0}^{d,d'}\left(dd'\int_{p/d}^{(p+1)/d}du\int_{q/d}^{(q+1)/d}
dv|u-v|^{2H-2}\right)^2,
\end{eqnarray*}
implying in turn thanks to (\ref{riemann}) that 
\[
\langle f^n_{ij}(d),f^n_{ij}(d')\rangle_{\H^{\otimes 2}_n}\to \big(\frac12{\bf 1}_{\{i\neq j\}}+ {\bf 1}_{\{i=j\}}\big)H^2(2H-1)^2\int_{[0,1]^2}|u-v|^{4H-4}dudv\quad\mbox{as $d,d'\to\infty$}.
\]
The proof of Proposition \ref{prop31} is complete.\qed

\bigskip

We are now in a position to define the notion of Rosenblatt-Wishart matrix of size $n$ with Hurst parameter $H$.

\begin{definition}\label{defi-rose}
For each $1\leq i,j\leq n$, 
let $g^n_{ij}\in\H_n^{\otimes 2}$ be the limit of $\{f^n_{ij}(d)\}_{d\geq 1}$ given  in \eqref{fij}.
The $n\times n$ Rosenblatt-Wishart matrix with Hurst parameter\footnote{It is clear that the limiting kernels $g_{ij}^n$, $1\leq i\leq j\leq n$, depend on the Hurst parameter $H$.} $H$ is the  random symmetric matrix 
$\mathcal{R}^{(H)}_{n}=\big(  R_{ij} \big)_{1\leq i,j\leq n}$ with its entries given by $R_{ij}=I_2(g^n_{ij})$.
\end{definition}

  Equivalently, one can also define  $\mathcal{R}^{(H)}_{n}$ as the entrywise $L^2(\Omega)$-limit (as $d\to\infty$) of
\begin{equation}\label{snd}
 \mathcal{S}_{n,d}=\left( \begin{matrix}
    S_{11} (d)&  \ldots   & S_{1n}(d) \\
    \vdots &   &  \vdots  \\
    S_{n1}(d) &  \ldots   & S_{nn}(d)
\end{matrix}  \right),
\end{equation} where 
\begin{equation}\label{other}
S_{ij}(d)=
\left\{
\begin{array}{ll}
{\displaystyle d\sum_{p=0}^{d-1}(B^i_{\frac{p+1}{d}}-B^i_{\frac{p}d})(B^j_{\frac{p+1}{d}}-B^j_{\frac{p}d}) }&\quad\mbox{if $i\neq j$}\\
\quad\\
{\displaystyle d\sum_{p=0}^{d-1}\left\{(B^i_{\frac{p+1}{d}}-B^i_{\frac{p}d})^2-d^{-2H}\right\} }&\quad\mbox{if $i=j$}
\end{array}
\right..
\end{equation}
Indeed, bearing in mind the notation (\ref{fij}) and (\ref{other}), one obtains
\begin{equation}\label{limit@}
S_{ij}(d)=I_2(f^n_{ij}(d))\overset{L^2(\Omega)}{\to}
R_{ij}\quad \mbox{as $d\to\infty$, for any $1\leq i,j\leq n$ },
\end{equation}
where the existence of the previous $L^2(\Omega)$-limit is a consequence of 
Proposition \ref{prop31} and the isometry property of double Wiener-It\^o integrals.
It is clear from \eqref{limit@} and \eqref{other}    that  $\mathcal{R}^{(H)}_{n}$ satisfies the following \emph{compatibility} relation: if we delete the last row and last column of  $\mathcal{R}^{(H)}_{n+1}$, then we obtain a matrix that is distributed as  $\mathcal{R}^{(H)}_{n}$.

Another consequence of both (\ref{limit@}) and the explicit expression of $S_{ij}(d)$ 
is that the diagonal entries $R_{ii}$ of the Rosenblatt-Wishart matrix  are all independent from each other and  distributed according to the Rosenblatt distribution. We refer the reader to the survey \cite{VTaqqu13} and the references therein for the definition of the Rosenblatt distribution (one of them being precisely that it is the distributional limit of $S_{11}(d)$ as $d\to\infty$) together with its main properties (cumulants, characteristic function, etc.). For the non-diagonal entries, one first observes that
$(B^i,B^j)\overset{\rm law}{=}(\frac{B^i+B^j}{\sqrt{2}},\frac{B^i-B^j}{\sqrt{2}})$ as a process
if $i\neq j$,
so that
\[
\big\{S_{ij}(d)\big\}_{d\geq 1} \overset{\rm law}{=}
\left\{
\frac{d}2\sum_{p=0}^{d-1}
\left[(B^i_{\frac{p+1}{d}}-B^i_{\frac{p}d})^2-d^{-2H}\right]
-
\frac{d}2\sum_{p=0}^{d-1}
\left[(B^j_{\frac{p+1}{d}}-B^j_{\frac{p}d})^2-d^{-2H}\right]
\right\}_{d\geq 1},
\]
implying in turn, by letting $d$ go to infinity, that
\begin{equation}\label{identitylaw}
R_{ij} \overset{\rm law}{=}\frac{1}{2}\big[   R_{ii}+R_{jj}\big].
\end{equation}
Note however that the previous identity in law (\ref{identitylaw}) {\it only} holds
for {\it fixed} $i,j$, that is, the corresponding identity in law at the matrix level {\it does not} hold true.

\subsection{Proof of Theorem \ref{thm2}}

We let the notation of Theorem \ref{thm2} prevail, as well as the notation introduced in the previous section.    Before bounding the Wasserstein distance between $ \wh{\mathcal{W}}_{n,d}$ and $\mathcal{R}^{(H)}_{n}$ (with  $\wh{\mathcal{W}}_{n,d}$ given by (\ref{wnd2}) and $\mathcal{R}^{(H)}_{n}$  being the $n\times n$ Rosenblatt-Wishart matrix with Hurst parameter $H$), 
we observe the following two facts:
\begin{enumerate}
\item[(a)] Given (\ref{covariance}) and (\ref{frac}), one has 
$
\wh{\mathcal{W}}_{n,d} \overset{\rm law}{=}
d^{1-2H}\left(W_{ij}(d)
\right)_{1\leq i,j\leq n},
$
where 
\[
W_{ij}(d) = \left\{
\begin{array}{ll}
{\displaystyle \sum_{p=0}^{d-1}(B^i_{p+1}-B^i_{p})(B^j_{p+1}-B^j_{p}) }&\quad \mbox{if $i\neq j$}\\
\\
{\displaystyle \sum_{p=0}^{d-1}\left\{(B^i_{p+1}-B^i_{p})^2-1\right\} }&\quad\mbox{if $i=j$}
\end{array}
\right..
\]
By the \emph{selfsimilarity} property of fractional Brownian motion, we deduce
that 
$
\wh{\mathcal{W}}_{n,d} \overset{\rm law}{=}
\mathcal{S}_{n,d},
$
with $\mathcal{S}_{n,d}$ given by (\ref{snd}).
As a result, with the notation of Definition \ref{defi-rose}, we get
\[
 d_{\rm Wass}\big(  \wh{\mathcal{W}}_{n,d},  \mathcal{R}^{(H)}_{n} \big)
 = d_{\rm Wass}\big(  \mathcal{S}_{n,d},  \mathcal{R}^{(H)}_{n}  \big).
\]
\item[(b)] By its very definition, the Wasserstein distance is bounded by the $L^2(\Omega)$-distance, that is,
 \begin{equation}\label{bound2}
d_{\rm Wass}\big(  \mathcal{S}_{n,d},  \mathcal{R}^{(H)}_{n}   \big)
\leq \sqrt{\sum_{1\leq i, j\leq n}\E\Big[   \big(   S_{ij}(d)-R_{ij}  \big)^2\Big]}.
\end{equation}
\end{enumerate}

We are thus left to estimate the right-hand side of (\ref{bound2}).
For this, we refer to \cite{BretonNourdin08}:
in the inequality (17) therein, the existence of a finite constant $c_H>0$, depending only on $H$, satisfying
\[
\E\Big[   \big(   S_{ij}(d)-R_{ij}  \big)^2\Big]  \leq c_H d^{3-4H}
\]
is shown.
Plugging this into (\ref{bound2}) completes the proof of Theorem \ref{thm2}.\qed

\section{Further results}

\subsection{Relaxing the row independence: overall correlation}

In this section, we consider a more general setting where   we no longer assume the row independence. That is, the relation \eqref{covariance} will be replaced by a more general one, namely:
\begin{align}\label{COVrs}
\langle e_{ij}, e_{i'j'} \rangle_\H = r(i-i') s(j-j')
\end{align}
where $r$ is another correlation function also satisfying  $r(0)=1$.  Recall the definition \eqref{wnd} 
of $\mathcal{W}_{n,d}=(W_{ij})_{1\leq i,j\leq n}$. Since $\E[X_{ik}X_{jk}]=r(i-j)\neq {\bf 1}_{\{i=j\}}$ in general, 
its entries $W_{ij}$ are no more centered in general, so we should modify the corresponding Gaussian ensemble by shifting a little bit. Equivalently, by keeping the corresponding    Gaussian ensemble centered,  we can modify the Wishart ensemble accordingly, that is, we will consider the following \emph{shifted} Wishart matrix
 \begin{align}\label{shifted-W}
  \wt{\mathcal{W}}_{n,d} =  \big(  \wt{W}_{ij} \big) _{1\leq i, j\leq n} =  \big(  I_2(f_{ij}^{(d)})\big) _{1\leq i, j\leq n} 
   \end{align}
with kernels $f_{ij}^{(d)}$ given as in \eqref{Wndii}. That is, 
$$\wt{W}_{ij} =    \dfrac{1}{\sqrt{d}} \sum_{k=1}^d \big(X_{ik} X_{jk} - r(i-j) \big) \,.$$
Now let $\mathcal{G}_{n,d}^{(r,s)}=(G_{ij})_{1\leq i,j\leq n}$ be the $n\times n$ random symmetric matrix such that 
the associated random vector
$
(G_{11},\ldots,G_{1n},G_{21},\ldots,G_{2n},\ldots,G_{n1},\ldots,G_{nn})^T
$
of $\R^{n^2}$ is Gaussian and has the same covariance matrix as
$$
(\wt{W}_{11},\ldots,\wt{W}_{1n},\wt{W}_{21},\ldots,\wt{W}_{2n},\ldots,\wt{W}_{n1},\ldots,\wt{W}_{nn})^T.
$$
It is again routine to check that for $1\leq i\leq j\leq n$ and $1\leq u\leq v\leq n$,
\begin{align}\label{COV-Gnrsd}
\E\big[  G_{ij}G_{uv}  \big]= \frac{r(i-u)r(v-j)+ r(i-v)r(u-j) }{d} \sum_{k,\ell=1}^d  s(k-\ell)^2 \,,
\end{align}
and regardless of the integrability of $r$, the covariance in \eqref{COV-Gnrsd} is uniformed bounded by   $2 \| s \| ^2_{\ell^2(\Z)} $,
since $|r(k)|\leq 1$ for all $k$ and
\begin{equation}\label{tripledol}
\frac1d\sum_{k,\ell=1}^d s(k-\ell)^2 = \sum_{|j|\leq d} \left(1-\frac{|j|}{d}\right)s(j)^2.
\end{equation}
However,  it seems highly nontrivial to decide whether the covariance matrix of\footnote{See (\ref{half}) for the definition of the `half' of a symmetric matrix.}  $(\mathcal{G}_{n,d}^{(r,s)})^{\rm half}$  is invertible or not. Therefore, we will not be able to apply Proposition \ref{NPR10-thm} for the Gaussian approximation as we did in the proof of Theorem \ref{thm1}.  Instead, we shall use the following bounds from \cite[Theorem 6.1.2]{bluebook}  and  \cite[Theorem 9.3]{NZ17}, whose main interest for us is that the covariance
matrix of the underlying Gaussian vector may not be invertible. The price to pay, however, is that one can no longer deal with the Wasserstein distance, and we have to replace it by a smoother distance.

\begin{prop}\label{NPmbd} Fix  integers $m\geq 2$ and  $1\leq p_1\leq \ldots \leq p_m$.  Let $F = (F_1, \ldots, F_m)$ be a random vector such that $F_i = I_{p_i}(f_i)$, with some $f_i\in\H^{\odot p_i}$ for each $i$. Assume that $Z$ is a centered Gaussian vector in $\R^m$ with the {\it same} covariance matrix $C$ as $F$. Then,
\begin{itemize}
\item[(i)] for any $h:\R^m\to\R$ belonging to $C^2(\R^m)$ such that $\| h'' \| _\infty < +\infty$, we have 
\begin{align}
\bv\E [ h(F)] -\E[ h(Z)  ] \bv & \leq \frac{1}{2} \| h''\| _\infty \sum_{i,j=1}^m  \E\Big[ \bv C(i,j) - p_i^{-1} \langle DF_i, DF_j \rangle \bv \Big]  \label{00un} \\
&\leq  \frac{m}{2} \| h''\| _\infty \sqrt{ \sum_{i,j=1}^m  {\rm Var}\Big( \frac{1}{p_i} \langle DF_i, DF_j \rangle  \Big)  }  \,, \label{00sept}
\end{align}
where\footnote{The equation \eqref{00un} is clear from the proof of Theorem 6.1.2 in \cite{bluebook}, while the inequality \eqref{00sept} follows easily from the Cauchy-Schwarz: notice that  there is a typo in the display (6.1.3) of \cite{bluebook} and our version is correct.   }     $\| h'' \| _\infty : = \sup\Big\{ \,\, \bv \dfrac{\partial^2 h}{\partial x_i \partial x_j}(x) \bv\,:\, x\in\R^m, 1\leq i, j\leq m \Big\}$.
\item[(ii)]  for every $h\in C^2(\R^m)$ with $M_2(h): = \sup\big\{ \| D^2 h(x) \| _{\rm op}\,:\, x\in\R^m \big\} < +\infty$,
 $$  \Big\vert \E\big[ h(F) - h(Z) \big] \Big\vert  \leq  \frac{\sqrt{m} M_2(h) }{2p_1}\left(   \sum_{i,j=1}^m \Var\big( \langle DF_i, DF_j \rangle \big) \right)^{1/2}   \, .$$

\end{itemize}
\end{prop}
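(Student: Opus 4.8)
The plan is to derive both parts of the Proposition from a single \emph{smart-path interpolation} identity, combining Gaussian integration by parts with Malliavin integration by parts. I take $Z$ to be defined on an enlarged probability space so that it is independent of the isonormal process $X$; since each $F_i=I_{p_i}(f_i)$ lives in the $p_i$th Wiener chaos, it belongs to the domain of $D$ and satisfies $\delta DF_i=p_iF_i$, where $\delta$ is the adjoint of $D$ (the divergence operator). First I would set $\Psi(t)=\E\big[h(\sqrt t\,F+\sqrt{1-t}\,Z)\big]$ for $t\in[0,1]$, so that $\E[h(F)]-\E[h(Z)]=\Psi(1)-\Psi(0)=\int_0^1\Psi'(t)\,dt$, and differentiate to obtain $\Psi'(t)=\sum_{i}\E\big[\partial_ih(\cdots)\big(\tfrac1{2\sqrt t}F_i-\tfrac1{2\sqrt{1-t}}Z_i\big)\big]$, where $(\cdots)$ abbreviates the argument $\sqrt t F+\sqrt{1-t}Z$.

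Next I would treat the two families of terms separately. For the $Z_i$-term I apply, conditionally on $F$, the Gaussian formula $\E[Z_i\,g(Z)]=\sum_jC(i,j)\,\E[\partial_jg(Z)]$; for the $F_i$-term I write $F_i=p_i^{-1}\delta DF_i$ and use the duality $\E[\delta(u)G]=\E[\langle u,DG\rangle_\H]$ together with the chain rule $D\big(\partial_ih(\sqrt t F+\sqrt{1-t}Z)\big)=\sqrt t\sum_j\partial^2_{ij}h(\cdots)\,DF_j$ (using $DZ=0$). The key bookkeeping point is that the factors $\sqrt t$ and $\sqrt{1-t}$ thereby produced cancel the singular prefactors $\tfrac1{2\sqrt t}$ and $\tfrac1{2\sqrt{1-t}}$, leaving the integrable identity
\[
\E[h(F)]-\E[h(Z)]=\frac12\int_0^1\sum_{i,j=1}^m\E\Big[\partial^2_{ij}h(\sqrt t F+\sqrt{1-t}Z)\,\Theta_{ij}\Big]\,dt,\qquad \Theta_{ij}:=\tfrac1{p_i}\langle DF_i,DF_j\rangle_\H-C(i,j).
\]
Since $C(i,j)=\E[F_iF_j]=p_i^{-1}\E[\langle DF_i,DF_j\rangle_\H]$, each $\Theta_{ij}$ is centered, hence $\E[\Theta_{ij}^2]=\Var\big(p_i^{-1}\langle DF_i,DF_j\rangle_\H\big)$.

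From this master identity both parts follow by elementary estimation. For \eqref{00un} I bound $|\partial^2_{ij}h|\le\|h''\|_\infty$ inside the integral and use the triangle inequality; \eqref{00sept} then comes from $\E[|\Theta_{ij}|]\le\sqrt{\Var(\cdots)}$ (Jensen) followed by one Cauchy-Schwarz over the $m^2$ pairs $(i,j)$, which yields the factor $m$. For part (ii) I read the inner sum as the Hilbert-Schmidt pairing $\sum_{i,j}\partial^2_{ij}h\,\Theta_{ij}=\Tr\big(D^2h\,\Theta^\top\big)$ and invoke the von Neumann trace inequality $|\Tr(D^2h\,\Theta^\top)|\le\|D^2h\|_{\rm op}\,\|\Theta\|_*\le\sqrt m\,\|D^2h\|_{\rm op}\,\|\Theta\|_{\rm HS}$ (valid since $\Theta$ has rank $\le m$); taking expectations, applying Jensen to pass the expectation inside the square root, and using $p_i^{-1}\le p_1^{-1}$ produce exactly the constant $\tfrac{\sqrt m\,M_2(h)}{2p_1}$.

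The step I expect to be the main obstacle is the analytic justification of the above manipulations: differentiation under the expectation and, above all, the Malliavin integration by parts applied to the composite functional $\partial_ih(\sqrt t F+\sqrt{1-t}Z)$, which must be shown to lie in the domain of $D$ with the endpoint singularities at $t=0,1$ genuinely integrable. The cleanest route is to establish the master identity first for $h\in C^3$ with bounded derivatives, where every term is manifestly well defined, and then to remove the extra smoothness by a standard mollification argument in $h$; this approximation is the only genuinely technical part, the rest being the bookkeeping above.
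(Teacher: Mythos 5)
Your proposal is correct, and the comparison here is slightly unusual because the paper offers no proof of its own for this proposition: it imports \eqref{00un} from the proof of Theorem 6.1.2 in \cite{bluebook}, observes in the footnote that \eqref{00sept} follows by Cauchy--Schwarz, and imports item (ii) from Theorem 9.3 in \cite{NZ17}. Your smart-path derivation of the master identity --- interpolating $\Psi(t)=\E[h(\sqrt{t}\,F+\sqrt{1-t}\,Z)]$, applying Gaussian integration by parts (conditionally on $F$) to the $Z$-terms and the relation $F_i=p_i^{-1}\delta DF_i$ plus duality to the $F$-terms, with the factors $\sqrt{t}$ and $\sqrt{1-t}$ cancelling the singular prefactors --- is precisely the argument behind the cited Theorem 6.1.2, so for item (i) you have in effect made the paper's citation self-contained; your centering claim $p_i^{-1}\E[\langle DF_i,DF_j\rangle_\H]=C(i,j)$ (including the cross-order case $p_i\neq p_j$, where both sides vanish) and the Cauchy--Schwarz over the $m^2$ pairs, producing the factor $m$, both check out and match the paper's footnote. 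The genuine departure from the paper's sources is item (ii): in \cite{NZ17} that bound is obtained by a different mechanism (exchangeable pairs built from the Mehler/Ornstein--Uhlenbeck semigroup), whereas you extract it from the same interpolation identity via von Neumann's trace inequality $|\Tr(AB)|\leq \|A\|_{\rm op}\|B\|_{*}$, the rank bound $\|\Theta\|_{*}\leq\sqrt{m}\,\|\Theta\|_{\rm HS}$, and $p_i^{-1}\leq p_1^{-1}$; this reproduces the stated constant $\sqrt{m}\,M_2(h)/(2p_1)$ exactly. What your route buys is a unified, self-contained proof of both items from a single identity, more elementary than invoking the exchangeable-pairs machinery; what it costs is the analytic bookkeeping you flag at the end (chain rule for $\partial_i h$ composed with $(F,Z)$ on the product space, differentiation under the expectation, and integrability at the endpoints $t=0,1$), all of which is standard and is handled correctly by your plan of proving the identity first for smoother $h$ and then mollifying.
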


\begin{obs}  {\rm
 With the above result, it is natural to consider the following distances
\begin{align*}
d_2(X, Y): = \sup_{\| h'' \| _\infty \leq 1} \big|\E[h(X)] - \E[h(Y)] \big| \quad\text{and}\quad \wt{d_2}(X, Y): = \sup_{M_2(h) \leq 1} \big|\E[h(X)] - \E[h(Y)]\big| 
\end{align*}
for any $m$-dimensional random vectors $X,Y$ with square-integrable components.     Given $h\in C^2(\R^m)$,  it is easy to check that $\| h'' \| _\infty\leq \sqrt{m} M_2(h)$ and $M_2(h) \leq m \| h''\| _\infty$, so if $M_2(h)<+\infty$, then $h$ has at most quadratic growth so that the random variables $h(X)$ and $h(Y)$   are integrable.  It follows from the previous discussion that
\[
\frac{1}{\sqrt{m}} ~   \wt{d_2}(X,Y)  \leq d_2(X,Y) \leq m ~\wt{d_2}(X,Y) \,.
\]
}
\end{obs}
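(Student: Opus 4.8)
The plan is to reduce the entire statement to two elementary comparisons between the operator norm $\|A\|_{\rm op}$ and the max-entry quantity $\max_{i,j}|A_{ij}|$ of a symmetric $m\times m$ matrix $A$, applied pointwise to the Hessian $A=D^2h(x)$, and then to propagate these comparisons through the definitions of $d_2$ and $\wt{d_2}$ using the linearity of $h\mapsto \E[h(X)]-\E[h(Y)]$. First I would record the two matrix inequalities. For one direction, $|A_{ij}|=|e_i^T A e_j|\leq \|e_i\|\,\|Ae_j\|\leq \|A\|_{\rm op}$ for every $i,j$, whence $\|h''\|_\infty\leq M_2(h)\leq \sqrt{m}\,M_2(h)$. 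For the other, using $\|A\|_{\rm op}\leq \|A\|_F$ (with $\|A\|_F$ the Frobenius norm) together with $\|A\|_F^2=\sum_{i,j}A_{ij}^2\leq m^2\max_{i,j}A_{ij}^2$, one gets $M_2(h)\leq m\,\|h''\|_\infty$. Both bounds hold uniformly in $x$, hence pass to the suprema defining $\|h''\|_\infty$ and $M_2(h)$.

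Next I would settle integrability, so that the relevant expectations are finite. If $M_2(h)<+\infty$, a first-order Taylor expansion with integral remainder controlled in operator norm gives $|h(x)|\leq |h(0)|+\|\nabla h(0)\|\,\|x\|+\tfrac12 M_2(h)\,\|x\|^2$, i.e. $h$ has at most quadratic growth. Since the components of $X$ and of $Y$ are square-integrable, $\E[\|X\|^2]<\infty$ and $\E[\|X\|]<\infty$ (similarly for $Y$), so $\E|h(X)|<\infty$ and $\E|h(Y)|<\infty$. Thus $\Delta(h):=\E[h(X)]-\E[h(Y)]$ is well defined and linear in $h$, and the same holds whenever $\|h''\|_\infty<\infty$ by the comparison $M_2(h)\leq m\|h''\|_\infty$.

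Finally I would combine these facts, exploiting that $h\mapsto\|h''\|_\infty$ and $h\mapsto M_2(h)$ are positively homogeneous seminorms while $\Delta$ is linear. To obtain $d_2\leq m\,\wt{d_2}$, take $h$ with $\|h''\|_\infty\leq 1$; then $M_2(h)\leq m$, so by homogeneity $|\Delta(h)|=M_2(h)\,|\Delta(h/M_2(h))|\leq m\,\wt{d_2}(X,Y)$, and taking the supremum over such $h$ gives the bound. Symmetrically, for $\tfrac{1}{\sqrt m}\wt{d_2}\leq d_2$, take $h$ with $M_2(h)\leq 1$; then $\|h''\|_\infty\leq \sqrt{m}$, so $|\Delta(h)|\leq \sqrt{m}\,d_2(X,Y)$, and the supremum over such $h$ yields $\wt{d_2}\leq \sqrt{m}\,d_2$.

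There is no serious analytic obstacle; the entire content is the pair of norm comparisons plus a scaling argument. The only point deserving care is that both seminorms vanish on affine functions, so if $X$ and $Y$ have distinct first moments then both $d_2$ and $\wt{d_2}$ equal $+\infty$ and the inequalities hold trivially in $[0,\infty]$; otherwise affine $h$ contribute nothing to $\Delta$ and the argument above is fully rigorous. In the intended application $X=F$ and $Y=Z$ are both centered, so this degenerate case does not arise.
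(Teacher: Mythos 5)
Your proposal is correct and follows essentially the same route the paper intends: the two pointwise Hessian-norm comparisons (max entry versus operator norm) combined with homogeneity of the seminorms and linearity of $h\mapsto\E[h(X)]-\E[h(Y)]$, plus the Taylor-expansion argument for quadratic growth and integrability. In fact your first comparison $\|h''\|_\infty\leq M_2(h)$ is sharper than the paper's stated $\|h''\|_\infty\leq\sqrt{m}\,M_2(h)$, and your treatment of the degenerate affine case (both distances being $+\infty$ when the means differ) is a careful detail the paper leaves implicit.
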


 Now we are ready to state the main result of this section.    

\begin{thm}\label{thm5} Let the above notation prevail, and assume that   $s\in \ell^2(\Z)$ and $r(0)= s(0)=1$.    Then,  {\rm (recalling the `half' notation from (\ref{half}),)}
\begin{itemize}
\item[(1)] we have
\begin{align*}
d_2\big( (\wt{\mathcal{W}}_{n,d})^{\rm half},  (\mathcal{G}_{n,d}^{(r,s)})^{\rm half} \big) =   \displaystyle O\Bigg\{\,\,   \frac{n^4}{\sqrt{d}} \Big(  \sum_{\vert k\vert \leq d} \vert s(k)\vert^{4/3}  \Big)^{3/2}  \,\,  \Bigg\} 
 \end{align*}
and 
\begin{align*}
\wt{d_2}\big((\wt{\mathcal{W}}_{n,d})^{\rm half},  (\mathcal{G}_{n,d}^{(r,s)})^{\rm half} \big)  =    \displaystyle O\Bigg\{\,\,   \frac{n^3}{\sqrt{d}} \Big(  \sum_{\vert k\vert \leq d} \vert s(k)\vert^{4/3}  \Big)^{3/2}  \,\,  \Bigg\} \,  ~;
 \end{align*}
\item[(2)] if in addition $r\in\ell^2(\Z)$, we have 
\begin{align*}
d_2\big( (\wt{\mathcal{W}}_{n,d})^{\rm half},  (\mathcal{G}_{n,d}^{(r,s)})^{\rm half} \big) =   O\Bigg\{ \frac{n^\frac72}{\sqrt{d}} \Big(  \sum_{\vert k\vert \leq d} \vert s(k)\vert^{4/3}  \Big)^{3/2}\Bigg\}  
 \end{align*}
and 
\begin{align*}
\wt{d_2}\big((\wt{\mathcal{W}}_{n,d})^{\rm half},  (\mathcal{G}_{n,d}^{(r,s)})^{\rm half} \big)  =  O\Bigg\{ \frac{n^\frac52}{\sqrt{d}} \Big(  \sum_{\vert k\vert \leq d} \vert s(k)\vert^{4/3}  \Big)^{3/2}\Bigg\}  \,  ~;
 \end{align*}

\item[(3)]  if in addition $r\in\ell^1(\Z)$, we have 
\begin{align*}
d_2\big( (\wt{\mathcal{W}}_{n,d})^{\rm half},  (\mathcal{G}_{n,d}^{(r,s)})^{\rm half}\big) =   O\Bigg\{ \frac{n^3}{\sqrt{d}} \Big(  \sum_{\vert k\vert \leq d} \vert s(k)\vert^{4/3}  \Big)^{3/2}\Bigg\}  
 \end{align*}
and 
\begin{align*}
\wt{d_2}\big((\wt{\mathcal{W}}_{n,d})^{\rm half},  (\mathcal{G}_{n,d}^{(r,s)})^{\rm half}\big)  =  O\Bigg\{ \frac{n^\frac52}{\sqrt{d}} \Big(  \sum_{\vert k\vert \leq d} \vert s(k)\vert^{4/3}  \Big)^{3/2}\Bigg\}  \,  ~.
 \end{align*}

\end{itemize}

\end{thm}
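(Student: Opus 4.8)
The plan is to apply Proposition \ref{NPmbd} directly to the $m=n(n+1)/2$ dimensional vectors $F=(\wt{\mathcal{W}}_{n,d})^{\rm half}$ and $Z=(\mathcal{G}_{n,d}^{(r,s)})^{\rm half}$ (no passage through Lemma \ref{22} is needed, as the statement is already phrased for the ``half'' vectors). Since the invertibility of their common covariance $C$ is unclear under \eqref{COVrs}, Proposition \ref{NPR10-thm} is unavailable and the price is that we work with $d_2$ and $\wt{d_2}$. All components $\wt{W}_{ij}=I_2(f_{ij}^{(d)})$ sit in the second chaos, so with $p_1=\dots=p_m=2$ part (ii) gives $\wt{d_2}(F,Z)\leq\frac{\sqrt m}{4}\big(\sum_{i\leq j,\,k\leq\ell}\Var(\langle DW_{ij},DW_{k\ell}\rangle)\big)^{1/2}$, while the mean-absolute-deviation bound \eqref{00un} of part (i), combined with Cauchy--Schwarz (using that $\E[\tfrac12\langle DW_{ij},DW_{k\ell}\rangle]$ is the matching entry of $C$), gives $d_2(F,Z)\leq\frac12\sum_{i\leq j,\,k\leq\ell}\sqrt{\Var(\tfrac12\langle DW_{ij},DW_{k\ell}\rangle)}$. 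By Fact~1 and the isometry property, $\Var(\tfrac12\langle DW_{ij},DW_{k\ell}\rangle)=8\,\|f_{ij}^{(d)}\otimes_1 f_{k\ell}^{(d)}\|_{\H^{\otimes2}}^2$, so both quantities reduce to controlling the contraction norms $\|f_{ij}^{(d)}\otimes_1 f_{k\ell}^{(d)}\|_{\H^{\otimes2}}$.

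The crux, and the only real departure from the proof of Theorem \ref{thm1}, is that the vanishing Fact~2 fails under \eqref{COVrs}: none of the $\sim n^4$ terms drops out, and instead each carries weights read off from $r$. First I would compute the contraction explicitly. Setting $A_{\alpha\beta}:=\frac1d\sum_{a,b=1}^d s(a-b)\,e_{\alpha a}\otimes e_{\beta b}$, expanding the four cross terms yields
\[
f_{ij}^{(d)}\otimes_1 f_{k\ell}^{(d)}=\tfrac14\big[r(i-k)A_{j\ell}+r(i-\ell)A_{jk}+r(j-k)A_{i\ell}+r(j-\ell)A_{ik}\big].
\]
A short computation using \eqref{COVrs} and $r(0)=1$ gives $\|A_{\alpha\beta}\|_{\H^{\otimes2}}^2=\|f_{11}^{(d)}\otimes_1 f_{11}^{(d)}\|_{\H^{\otimes2}}^2\leq V$ for all $\alpha,\beta$, where $V:=\frac1d\big(\sum_{|k|\leq d}|s(k)|^{4/3}\big)^3$ by Fact~3. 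The triangle inequality then produces the pointwise estimate
\[
\|f_{ij}^{(d)}\otimes_1 f_{k\ell}^{(d)}\|_{\H^{\otimes2}}\leq\tfrac{\sqrt V}{4}\big(|r(i-k)|+|r(i-\ell)|+|r(j-k)|+|r(j-\ell)|\big),
\]
and, after $(a+b+c+d)^2\leq4(a^2+b^2+c^2+d^2)$, the squared bound $\|f_{ij}^{(d)}\otimes_1 f_{k\ell}^{(d)}\|_{\H^{\otimes2}}^2\leq\frac V4\big(r(i-k)^2+r(i-\ell)^2+r(j-k)^2+r(j-\ell)^2\big)$.

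It then remains to sum the $r$-weights, extending the half-sums to all $i,j,k,\ell\in\{1,\dots,n\}$ for an upper bound. By symmetry everything is governed by the two elementary sums $\sum_{i,j,k,\ell}|r(i-k)|=n^2\sum_{|m|<n}(n-|m|)|r(m)|$ and $\sum_{i,j,k,\ell}r(i-k)^2=n^2\sum_{|m|<n}(n-|m|)r(m)^2$. For the first I would use $|r|\leq1$ in general ($\leq n^4$), Cauchy--Schwarz $\sum_{|m|<n}|r(m)|\leq\sqrt{2n-1}\,\|r\|_{\ell^2(\Z)}$ when $r\in\ell^2(\Z)$ ($\lesssim n^{7/2}$), and $\sum_{|m|<n}|r(m)|\leq\|r\|_{\ell^1(\Z)}$ when $r\in\ell^1(\Z)$ ($\lesssim n^3$); for the second, $(n-|m|)\leq n$ gives $\leq n^4$ in general and $\leq n^3\|r\|_{\ell^2(\Z)}^2\lesssim n^3$ as soon as $r\in\ell^2(\Z)$ (in particular whenever $r\in\ell^1(\Z)$). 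Feeding these into the two reductions of the first paragraph, together with $\sqrt V=d^{-1/2}\big(\sum_{|k|\leq d}|s(k)|^{4/3}\big)^{3/2}$ and $\sqrt m\leq n$, yields exactly the six claimed rates.

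The main obstacle is the explicit contraction identity above, which must replace the disjoint-support vanishing exploited in Theorem \ref{thm1}; once it is in hand the remaining combinatorics is routine. A secondary but essential point is that in the $\ell^1(\Z)$ case one has to keep the sharper bound \eqref{00un} for $d_2$: only the $\ell^1$-summability of $|r|$ — not that of $r^2$ — produces the rate $n^3$, whereas routing $d_2$ through \eqref{00sept} (equivalently, through the $\wt{d_2}$-type estimate $m\,(\sum\Var)^{1/2}$) would only give $n^{7/2}$.
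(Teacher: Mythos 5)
Your proposal is correct and follows essentially the same route as the paper's own proof: Proposition \ref{NPmbd} applied to the half-vectors, reduction via {\bf\small Fact 1} to the contraction norms $\|f_{ij}^{(d)}\otimes_1 f_{k\ell}^{(d)}\|_{\H^{\otimes 2}}$, the {\bf\small Fact 3} quantity $V$ weighted by $r$-factors, and then the counting bounds corresponding to \eqref{cc1}--\eqref{cc2} under the three integrability assumptions. Your explicit four-term contraction identity plus the triangle inequality is a tidier packaging of the paper's sixteen-term expansion $\X_{i,j,p,q}$ of the squared norm, and in case (2) your use of \eqref{00un} with a per-term Cauchy--Schwarz (rather than the paper's appeal to \eqref{00sept}) yields the same rate $n^{7/2}$, so all six claimed bounds come out identically.
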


\begin{proof} For $i,j,p,q\in\{1, \ldots, n\}$, we have, similarly to (\ref{doubledol}), that
\begin{align}
&\quad \big\| f_{ij}^{(d)}\otimes_1 f_{pq}^{(d)} \big\| ^2_{\H^{\otimes 2}} \notag \\
&=\frac{1}{16d^2} \left\| \sum_{k,\ell=1}^d \big( e_{ik}\otimes e_{jk} + e_{jk}\otimes e_{ik}\big)\otimes_1 \big(e_{p\ell}\otimes e_{q\ell} + e_{q\ell}\otimes e_{p\ell}\big) \right\| ^2_{\H^{\otimes 2}} \notag \\
& = \frac{1}{16d^2} \Bigg\| \sum_{k,\ell=1}^d  \bigg( (e_{ik}\otimes e_{p\ell}) r(j-q) s(k-\ell) + (e_{ik}\otimes e_{q\ell}) r(p-j) s(k-\ell)  \notag \\
 & \qquad\qquad\qquad  +  (e_{jk}\otimes e_{p\ell}) r(i-q) s(k-\ell) + e_{jk}\otimes e_{q\ell} r(i-p) s(k-\ell)  \bigg)  \Bigg\| ^2_{\H^{\otimes 2}} \notag \\
 & =  \frac{ \X_{ i,j,p,q}}{16d^2} \sum_{k,\ell, u, v=1}^d s(k-u)s(\ell-v) s(k-\ell)s(u-v)  \leq \frac{\X_{ i,j,p,q}}{16d} \left(  \sum_{\vert k\vert \leq d} \vert s(k)\vert^{\frac43}  \right)^3  \,, \label{crucial-0}
\end{align}
where the last inequality follows from {\bf\small Fact 3} in Section 2.1 and  $ \X_{ i,j,p,q}$ is a sum of the following sixteen terms:
\begin{align*}
  &\quad \quad  r(j-q)^2 + r(j-q)r(p-j)r(p-q) + r(j-i)r(q-j)r(i-q)\\
  &\quad   + r(i-j) r(p-q)  r(i-p) r( j- q) + r(p-j)^2 + r(p-q)r(j-q) r(p-j) \\
 &\quad    +  r(i-j) r(p-q) r(i-q) r(p-j) + r(i-j) r(i-p)r(p-j) + r(i-q)^2 + r(i-p)^2\\
 &\quad    + r(i-q) r(i-j) r(j-q) + r(i-q) r(i-j) r(p-q) r(p-j) + r(i-q) r(p-q) r(i-p)  \\
  &\quad   + r(i-p) r(i-j) r(p-q)r(j-q)  +r(i-p)r(i-j) r(p-j)+r(i-p)r(p-q)r(q-i) \, .
\end{align*}
Using the fact that $\vert r(k)\vert \leq 1$ for each $k\in\Z$ and $2ab \leq a^2 + b^2$ for $a,b\in\R$, we have the following estimate:
\begin{align}
 \X_{ i,j,p,q} = O\Big(  r(j-q)^2 + r(i-j)^2 + r(i-p)^2 + r(i-q)^2 + r(j-p)^2 + r(p-q)^2 \Big) \,, \label{X-bdd}
\end{align}
while the following two  crude estimates also hold:
 \begin{align}
  \X_{ i,j,p,q} &= O\big[ \vert r(j-q)\vert+\vert r(i-j)\vert +\vert r(i-p)\vert +\vert r(i-q)\vert +\vert r(j-p)\vert +\vert r(p-q)\vert \big] \,, \label{crude-1}\\
\bv \X_{ i,j,p,q} \bv& \leq 16\,. \label{crude-2}
 \end{align}
 Note that if $r\in\ell^2(\Z)$,
 \begin{align}\label{cc1}
 \sum_{i , j , p , q =1}^n r(j-q)^2 = n^2 \sum_{1\leq j  , q\leq n} r(j-q)^2 = n^3 \dfrac{1}{n} \sum_{k,\ell=1}^n r(k-\ell)^2  \leq n^3 \| r\| _{\ell^2(\Z)}^2 \,, 
  \end{align}
where the last inequality follows from (\ref{tripledol}).  The same argument will give us that under the assumption $r\in\ell^1(\Z)$, 
\begin{align}\label{cc2}
 \sum_{i , j , p , q =1}^n  \vert r(j-q)\vert   \leq n^3 \| r\| _{\ell^1(\Z)}  \,.
        \end{align}

Now let us show the bounds in the assertion (1):  it follows first from  Proposition \ref{NPmbd}-(i), then from {\bf\small Fact 1} and isometry relation  that 
\begin{align}
d_2\big( (\wt{\mathcal{W}}_{n,d})^{\rm half},  (\mathcal{G}_{n,d}^{(r,s)})^{\rm half} \big)& \leq\frac{1}{2}    \sum_{\substack{ 1\leq i \leq j \leq n \\  1\leq p \leq q \leq n}} \E\big[ \vert 2 I_2(f_{ij}^{(d)} \otimes_1 f_{pq}^{(d)} ) \vert \big]  \notag \\
&  \leq  \sqrt{2}   \sum_{\substack{ 1\leq i \leq j \leq n \\  1\leq p \leq q \leq n}}  \big\| f_{ij}^{(d)} \otimes_1 f_{pq}^{(d)} \big\| _{\H^{\otimes 2}} . \label{there!}
\end{align}
Then the crude estimate \eqref{crude-2} and the bound \eqref{crucial-0} imply  that 
\[
d_2\big( (\wt{\mathcal{W}}_{n,d})^{\rm half},  (\mathcal{G}_{n,d}^{(r,s)})^{\rm half} \big) \leq  \frac{n^4}{\sqrt{d}}   \left(  \sum_{\vert k\vert \leq d} \vert s(k)\vert^{4/3}  \right)^{3/2}.
\]
And similarly,  we can apply Proposition \ref{NPmbd}-(ii)  to get
\begin{align}
\wt{d_2}\big( (\wt{\mathcal{W}}_{n,d})^{\rm half},  (\mathcal{G}_{n,d}^{(r,s)})^{\rm half}  \big)& \leq \frac{n}{4} \sqrt{    \sum_{\substack{ 1\leq i \leq j \leq n \\  1\leq p \leq q \leq n}}     \Var\big( 4 I_2(f_{ij}^{(d)} \otimes_1 f_{pq}^{(d)} )  \big)  }  \notag\\
&  \leq \sqrt{2} n  \sqrt{    \sum_{\substack{ 1\leq i \leq j \leq n \\  1\leq p \leq q \leq n}}   \| f_{ij}^{(d)} \otimes_1 f_{pq}^{(d)} \| ^2_{\H^{\otimes 2}} }   \label{here!}  \\
&\leq     \sqrt{2} n  \sqrt{  \frac{n^4}{d}   \left(  \sum_{\vert k\vert \leq d} \vert s(k)\vert^{4/3}  \right)^{3}   }    .\notag
\end{align}

Now let us assume  additionally that $r\in\ell^2(\Z)$, then with similar arguments and using in particular the estimates \eqref{X-bdd}, \eqref{cc1}, we have
\begin{align*}
&\qquad d_2\big( (\wt{\mathcal{W}}_{n,d})^{\rm half},  (\mathcal{G}_{n,d}^{(r,s)})^{\rm half}  \big) \\
&\leq  \frac{n^2}{2} \sqrt{  \sum_{\substack{ 1\leq i \leq j \leq n \\  1\leq p \leq q \leq n}}   \| f_{ij}^{(d)} \otimes_1 f_{pq}^{(d)} \| ^2_{\H^{\otimes 2}}      } =  O(n^2)     \sqrt{  \sum_{\substack{ 1\leq i \leq j \leq n \\  1\leq p \leq q \leq n}} \frac{ \X_{ i,j,p,q}}{16d} \left(  \sum_{\vert k\vert \leq d} \vert s(k)\vert^{4/3}  \right)^3  } \\
&=O(n^2)     \sqrt{ \frac{n^3 \|  r \| ^2_{\ell^2(\Z)}}{d} \left(  \sum_{\vert k\vert \leq d} \vert s(k)\vert^{4/3}  \right)^3  } \quad\text{by  \eqref{cc1};   }
\end{align*}
and 
\begin{align*}
\wt{d_2}\big( (\wt{\mathcal{W}}_{n,d})^{\rm half},  (\mathcal{G}_{n,d}^{(r,s)})^{\rm half} \big) &   \leq \sqrt{2} n  \sqrt{    \sum_{\substack{ 1\leq i \leq j \leq n \\  1\leq p \leq q \leq n}}   \| f_{ij}^{(d)} \otimes_1 f_{pq}^{(d)} \| ^2_{\H^{\otimes 2}} }   \quad\text{see \eqref{here!};}   \\
& =    O\left\{ \frac{n^\frac52}{\sqrt{d}} \left(  \sum_{\vert k\vert \leq d} \vert s(k)\vert^{4/3}  \right)^\frac32\right\}   \,.
\end{align*}
Now let us assume  additionally that $r\in\ell^1(\Z)$, then with similar arguments and using in particular the estimates \eqref{crude-2}, \eqref{cc2}, we have
\begin{align*}
d_2\big( (\wt{\mathcal{W}}_{n,d})^{\rm half},  (\mathcal{G}_{n,d}^{(r,s)})^{\rm half}  \big) &\leq   \sqrt{2}   \sum_{\substack{ 1\leq i \leq j \leq n \\  1\leq p \leq q \leq n}}  \| f_{ij}^{(d)} \otimes_1 f_{pq}^{(d)} \|_{\H^{\otimes 2}}  \quad\text{see \eqref{there!};} \\
& \leq      \sum_{\substack{ 1\leq i \leq j \leq n \\  1\leq p \leq q \leq n}}   \frac{\sqrt{ \X_{ i,j,p,q} }}{\sqrt{d}} \left(  \sum_{\vert k\vert \leq d} \vert s(k)\vert^{4/3}  \right)^{3/2}   \quad\text{by \eqref{crucial-0};} \\
& = O \left(  \sum_{\vert k\vert \leq d} \vert s(k)\vert^{4/3}  \right)^{3/2} \times \frac{1}{\sqrt{d}}   \sum_{i,j,p,q=1}^n   \vert r(j-q)\vert  \quad\text{by \eqref{X-bdd};}\\
& = O \left(  \sum_{\vert k\vert \leq d} \vert s(k)\vert^{4/3}  \right)^{3/2} \times \frac{n^3}{\sqrt{d}}  \| r\| _{\ell^1(\Z)} \quad\text{by \eqref{cc2}.} 
\end{align*}
   Finally,
\begin{align*}
\wt{d_2}\big((\wt{\mathcal{W}}_{n,d})^{\rm half},  (\mathcal{G}_{n,d}^{(r,s)})^{\rm half}  \big) &   \leq \sqrt{2} n  \sqrt{    \sum_{\substack{ 1\leq i \leq j \leq n \\  1\leq p \leq q \leq n}}   \| f_{ij}^{(d)} \otimes_1 f_{pq}^{(d)} \| ^2_{\H^{\otimes 2}} }   \quad\text{see \eqref{here!};}   \\
& = \sqrt{2} n  \sqrt{    \sum_{\substack{ 1\leq i \leq j \leq n \\  1\leq p \leq q \leq n}}  \frac{\X(i,j,p,q)}{d}  \left(  \sum_{\vert k\vert \leq d} \vert s(k)\vert^{4/3}  \right)^{3}  }   \quad\text{by \eqref{crucial-0};} \\
& =  O(n)  \left(  \sum_{\vert k\vert \leq d} \vert s(k)\vert^{4/3}  \right)^{3/2}\times  \sqrt{ \frac{1}{d} \sum_{i,j,p,q=1}^n  \vert r(i-j)\vert} \quad\text{by \eqref{crude-1};} \\
& = O(n)  \left(  \sum_{\vert k\vert \leq d} \vert s(k)\vert^{4/3}  \right)^{3/2}\times \sqrt{ \frac{n^3}{d}\| r\| _{\ell^1(\Z)}} \qquad\text{by \eqref{cc2}.}\qedhere 
\end{align*}

\end{proof}

Keeping in mind that our goal is to obtain high-dimensional regime for the distributional convergence, we compare the distances $d_2$, $\wt{d_2}$ with the Wasserstein distance  in the following remark.

\begin{prop}\label{rk44}
Let $X,Y$ be two random vectors in $\R^m$ with square-integrable components. Then, by standard \emph{smoothing argument}, we have 
\begin{align}\label{smoothie}
d_{\rm Wass}(X, Y) \leq 2\sqrt{2}  \,m^{1/4}\,\sqrt{d_2(X, Y)   } \,.
\end{align}
\end{prop}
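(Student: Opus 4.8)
The plan is to run the classical Gaussian smoothing argument, which converts a $1$-Lipschitz test function into a $C^2$ function whose second derivatives are controlled, so that the definition of $d_2$ can be invoked. Fix a $1$-Lipschitz function $g:\R^m\to\R$ and let $N$ be a standard Gaussian vector in $\R^m$; for each $t>0$ introduce the heat-kernel regularization $g_t(x)=\E[g(x+\sqrt t\,N)]$. I would bound $|\E[g(X)]-\E[g(Y)]|$ by comparing $g$ with $g_t$ on each side and comparing $g_t(X)$ with $g_t(Y)$ through $d_2$, and then optimize over $t$.

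Two estimates drive the proof. First, the approximation error: since $g$ is $1$-Lipschitz, one has $|g_t(x)-g(x)|\le\E[\|\sqrt t\,N\|]\le\sqrt t\,\sqrt{\E[\|N\|^2]}=\sqrt{tm}$, uniformly in $x$. Second, and this is the delicate point, I must bound $\|g_t''\|_\infty$. Because $g$ is only Lipschitz and not $C^1$, I cannot differentiate twice directly under the expectation; instead I would integrate by parts, placing one derivative on $g$ (whose weak gradient exists and satisfies $\|\nabla g\|\le 1$ almost everywhere) and the other on the Gaussian density. Writing $g_t(x)=\int g(y)\,\gamma_t(y-x)\,dy$ with $\gamma_t$ the $N(0,t\,\mathrm{Id}_m)$ density and using $\partial_{u_i}\gamma_t(u)=-t^{-1}u_i\gamma_t(u)$, one obtains $\partial_{x_i}\partial_{x_j}g_t(x)=t^{-1/2}\,\E[\partial_j g(x+\sqrt t\,N)\,N_i]$, whence $|\partial_{x_i}\partial_{x_j}g_t(x)|\le t^{-1/2}\,\E[|N_i|]=t^{-1/2}\sqrt{2/\pi}\le t^{-1/2}$. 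Thus $\|g_t''\|_\infty\le t^{-1/2}$, where the crude bound $\sqrt{2/\pi}\le 1$ is exactly what is needed to reach the stated constant.

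With these two bounds in hand, the triangle inequality gives, for every $1$-Lipschitz $g$ and every $t>0$,
$$|\E[g(X)]-\E[g(Y)]|\le 2\sqrt{tm}+\|g_t''\|_\infty\,d_2(X,Y)\le 2\sqrt{tm}+t^{-1/2}\,d_2(X,Y),$$
where the middle step applies the very definition of $d_2$ to $g_t\in C^2(\R^m)$ (by homogeneity, $|\E[g_t(X)]-\E[g_t(Y)]|\le\|g_t''\|_\infty\,d_2(X,Y)$). Taking the supremum over all $1$-Lipschitz $g$ yields $d_{\rm Wass}(X,Y)\le 2\sqrt{tm}+t^{-1/2}\,d_2(X,Y)$ for every $t>0$. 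It then remains to minimize the right-hand side in $t$: the optimal choice $t=d_2(X,Y)/(2\sqrt m)$ produces precisely $2\sqrt 2\,m^{1/4}\sqrt{d_2(X,Y)}$, which is the claimed inequality \eqref{smoothie}. The only genuine obstacle is the second-derivative estimate for a merely Lipschitz $g$, handled by the asymmetric integration by parts above; the remaining steps are a routine one-parameter optimization.
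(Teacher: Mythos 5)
Your proof is correct and follows essentially the same route as the paper's: Gaussian smoothing $h_\e(x)=\E[h(x+\e N)]$ (your $t=\e^2$), the second-derivative bound $\|h_\e''\|_\infty\le\e^{-1}$ via one integration by parts against the Gaussian density, the approximation error $\e\sqrt m$, and a final optimization over the smoothing parameter. The only difference is cosmetic: you spell out the integration-by-parts step that the paper dismisses as ``routine,'' and you parametrize by $t$ rather than $\e$.
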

\begin{proof}
Indeed, consider any  $h:\R^m\to\R$ $1$-Lipschitz function. Define, for $\e > 0$,  the smoothed version $h_\e(x) := \E\big[ h(x + \e N ) \big]$ for every $x\in\R^m$, with $N$ standard Gaussian independent of $X$ and $Y$. It is routine to check via a simple Gaussian integration by parts that for each $i,j\in\{1,\ldots, m\}$
\[
\frac{\partial^2}{\partial x_i \partial x_j} h_\e(x) = \frac{1}{\e} \E\Big[ N_i \frac{\partial}{\partial x_j} h(x + \e N ) \Big] \,\,,
\]
from which we obtain  $\| h_\e'' \| \leq \e^{-1}$. Therefore, we have $\big\vert  \E[ h_\e(X) - h_\e(Y)  ]  \big\vert  \leq   \e^{-1}d_2(X, Y)$ while 
$ \big\vert  \E[ h(X) - h_\e(X)  ]  \big\vert  \leq   \E\big[  \vert h(X) - h(X + \e N)  \vert  \big]   \leq \e \,\E\big[ \| N \| _{\R^m} \big] \leq \e\, \sqrt{m}$, thus we have
 \begin{align}
\big\vert \E[ h(X) - h(Y) ]  \big\vert &\leq  \big\vert\E[ h(X) - h_\e(X) ]\big\vert +\big\vert\E[ h_\e(X) - h_\e(Y) ] \big\vert+ \big\vert\E[ h_\e(Y) - h(Y)]\big\vert  \label{LHS0}\\
& \leq 2 \e \,\sqrt{m} +  \e^{-1}\, d_2(X, Y)  \,\,\,\text{for any $\e > 0$}\,. \label{optim0}
\end{align}
 Optimizing over $\e > 0$ in \eqref{optim0} first, then taking supremum on the left-hand side of \eqref{LHS0} give us \eqref{smoothie}.
\end{proof}

% {\color{red}  [Can we have a better control? compare $d_{\rm Wass}$ with $\wt{d_2}$?  if so, due to the better regime in $\wt{d_2}$, we may be able to get better regimes in $d_{\rm Wass}$ as well. ]}

As an easy consequence of Proposition \ref{rk44} combined with Lemma \ref{22} and Theorem \ref{thm5}, we have the following bounds in Wasserstein distance.

\begin{cor} \label{cor55}   Let the assumptions of Theorem \ref{thm5} prevail, that is,   $s\in \ell^2(\Z)$ and $r(0)= s(0)=1$.    Then, 
\begin{align*}
d_{\rm Wass}    \big( \wt{\mathcal{W}}_{n,d}  ,\mathcal{G}_{n,d}^{(r,s)}\big) = \begin{cases}
{\displaystyle  O\Bigg\{ \frac{n^{5/2}}{d^{1/4}} \Big(  \sum_{\vert k\vert \leq d} \vert s(k)\vert^{4/3}  \Big)^{3/4}\Bigg\}     } \, ~ ;\\
 {\displaystyle  O\Bigg\{ \frac{n^{9/4}}{d^{1/4}} \Big(  \sum_{\vert k\vert \leq d} \vert s(k)\vert^{4/3}  \Big)^{3/4}\Bigg\}   } \,\,\,~ \text{if in addition $r\in\ell^2(\Z)$;}\\
 {\displaystyle  O\Bigg\{ \frac{n^{2}}{d^{1/4}} \Big(  \sum_{\vert k\vert \leq d} \vert s(k)\vert^{4/3}  \Big)^{3/4}\Bigg\}   } \,\,\,~ \text{if in addition $r\in\ell^1(\Z)$.}\\
\end{cases}
 \end{align*}
In particular, if in addition $s\in \ell^{4/3}(\Z)$, then  $ \wt{\mathcal{W}}_{n,d}$ is $\phi$-close to $\mathcal{G}_{n,d}^{(r,s)}$ for 
\begin{align*}
\phi(n,d) = \begin{cases}
n^{10}/d \, ~ ;\\
n^{9}/d\,\,\,\quad \text{if in addition $r\in\ell^2(\Z)$;}\\
n^{8}/d\,\,\,\quad \text{if in addition $r\in\ell^1(\Z)$.}\\
\end{cases}
 \end{align*}

\end{cor}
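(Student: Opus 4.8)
The plan is to obtain the three Wasserstein bounds by simply chaining together the three tools assembled immediately above the statement: the symmetric-to-vector reduction of Lemma \ref{22}, the smoothing inequality of Proposition \ref{rk44}, and the $d_2$-estimates of Theorem \ref{thm5}. The conceptual point is that Theorem \ref{thm5} works with the \emph{smoother} distances $d_2$ and $\wt{d_2}$ precisely because the limiting covariance matrix of $(\mathcal{G}_{n,d}^{(r,s)})^{\rm half}$ need not be invertible; the smoothing argument of Proposition \ref{rk44} then trades a loss of polynomial factors in the dimension for an upgrade back to the genuine Wasserstein distance.

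First I would pass from matrices to their \textup{`half'} vectors via Lemma \ref{22}, writing
\[
d_{\rm Wass}\big( \wt{\mathcal{W}}_{n,d}, \mathcal{G}_{n,d}^{(r,s)} \big) \leq \sqrt{2}\, d_{\rm Wass}\big( (\wt{\mathcal{W}}_{n,d})^{\rm half}, (\mathcal{G}_{n,d}^{(r,s)})^{\rm half} \big),
\]
reducing everything to an estimate for random vectors in $\R^m$ with $m = n(n+1)/2$. Next I would apply Proposition \ref{rk44} with this value of $m$, giving
\[
d_{\rm Wass}\big( (\wt{\mathcal{W}}_{n,d})^{\rm half}, (\mathcal{G}_{n,d}^{(r,s)})^{\rm half} \big) \leq 2\sqrt{2}\, m^{1/4}\, \sqrt{ d_2\big( (\wt{\mathcal{W}}_{n,d})^{\rm half}, (\mathcal{G}_{n,d}^{(r,s)})^{\rm half} \big) }.
\]
The essential bookkeeping here is that $m = n(n+1)/2 = O(n^2)$, so $m^{1/4} = O(n^{1/2})$: this is the single extra half-power of $n$ that the smoothing step contributes.

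Finally I would feed each of the three $d_2$-bounds of Theorem \ref{thm5} into the right-hand side. A $d_2$-bound of the generic shape $n^{a} d^{-1/2}\big(\sum_{|k|\leq d}|s(k)|^{4/3}\big)^{3/2}$ has square root $n^{a/2} d^{-1/4}\big(\sum_{|k|\leq d}|s(k)|^{4/3}\big)^{3/4}$, and multiplying by $m^{1/4} = O(n^{1/2})$ raises the exponent of $n$ by $1/2$. Taking $a = 4$ (general $r$), $a = 7/2$ (for $r\in\ell^2(\Z)$), and $a = 3$ (for $r\in\ell^1(\Z)$) yields the exponents $5/2$, $9/4$, and $2$, matching the three displayed bounds. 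For the $\phi$-closeness conclusion I would specialize to $s\in\ell^{4/3}(\Z)$, where $\sum_{|k|\leq d}|s(k)|^{4/3}$ stays bounded; each bound is then $O(n^{b} d^{-1/4})$ and tends to zero exactly when $(n^{b} d^{-1/4})^4 = n^{4b}/d \to 0$, giving $\phi(n,d) = n^{10}/d,\ n^{9}/d,\ n^{8}/d$ for $b = 5/2,\ 9/4,\ 2$ respectively.

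I expect no serious obstacle in this argument, as it is a mechanical composition of three ready-made estimates. The only point requiring genuine care is the consistent propagation of the dimension factor $m = n(n+1)/2$, which enters once through the $\sqrt{2}$ of Lemma \ref{22} and once through the $m^{1/4} = O(n^{1/2})$ of Proposition \ref{rk44}; this is what produces the extra half-power of $n$ in each exponent relative to the naive $\sqrt{d_2}$ scaling, and mislabeling it would shift every exponent and hence every threshold $\phi$.
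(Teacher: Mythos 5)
Your proposal is correct and is exactly the paper's argument: the corollary is stated there as "an easy consequence of Proposition \ref{rk44} combined with Lemma \ref{22} and Theorem \ref{thm5}," which is precisely the chain you execute, and your bookkeeping (the factor $m^{1/4}=O(n^{1/2})$ with $m=n(n+1)/2$ adding half a power of $n$ to each exponent, and the fourth-power conversion $n^{b}d^{-1/4}\to n^{4b}/d$ for the $\phi$-regimes) matches the stated exponents $5/2,\,9/4,\,2$ and thresholds $n^{10}/d,\,n^{9}/d,\,n^{8}/d$.
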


\medskip

\subsection{Random $p$-tensors}   In this section, we  investigate the Gaussian approximation of  random $p$-tensors, a natural extension of the Wishart matrices.  The distribution of the random $p$-tensors has recently gained        interest in the context of machine learning, see for instance \cite{AGHKT14}. 

\medskip

Let us suppose $\{\e_i, i=1,\ldots, n\}$ is the canonical basis of $\R^n$. Then for $p\geq 2$, the $p$-tensor product of $x\in\R^n$ is given by  
  $$ x^{\otimes p} =  \left( \,\,  \sum_{i=1}^n \langle x , \e_i \rangle_{\R^n} \e_i \,\,\right)^{\otimes p}  =   \sum_{i_1,\ldots, i_p=1}^n \left(  \prod_{k\in[p]}\langle x , \e_{i_k} \rangle_{\R^n}   \right) \e_{i_1}\otimes \cdots \otimes\e_{i_p} ,  $$
 which can be identified as a vector in $\R^{n^p}$.  For what we care most in the present work, $\mathbb{X}_i = ( X_{1i}, \ldots, X_{ni} )^T \in\R^n$ is the $i$th column of  the rectangular random matrix $\mathcal{X}_{n,d}$, then
  $$\mathbb{X}_i^{\otimes p} =  \sum_{j_1,\ldots, j_p=1}^n \left(  \prod_{k=1}^p  X_{j_k i }   \right) \e_{j_1}\otimes \cdots \otimes\e_{j_p}    $$
 so that 
$$  \dfrac{1}{\sqrt{d}} \sum_{i=1}^d    \mathbb{X}_i^{\otimes p}   =   \sum_{j_1,\ldots, j_p=1}^n  \dfrac{1}{\sqrt{d}} \sum_{i=1}^d  \left(  \prod_{k=1}^p  X_{j_k i }   \right) \e_{j_1}\otimes \cdots \otimes\e_{j_p}  \,,$$
which can also be seen as the random vector $ \mathbf{Y}   \in \R^{n^p} $ given below  $$\left( \,\,  \mathbf{Y}_{\textbf{j}} =\dfrac{1}{\sqrt{d}} \sum_{i=1}^d     \prod_{k=1}^p  X_{j_k i } , 1\leq j_1,\ldots, j_p \leq n    \,\, \right) \,\, .$$
Note that $2$-tensor case corresponds to the matrix, which we have dealt with in the previous sections. To simplify the matter and without losing the essence, we will remove the diagonal terms, that is, we will only consider the Gaussian approximation of 
\begin{align}\label{tensor0diag}
 \mathcal{Y}_{n,d}= \left( \,\,  \mathbf{Y}_{\textbf{j}} :=\dfrac{1}{\sqrt{d}} \sum_{i=1}^d     \prod_{k=1}^p  X_{j_k i } \,,\, \textbf{j}\in\Delta_p   \,\, \right)  \,\, ,
\end{align}
where $\Delta_p : = \big\{ \{ j_1,\ldots, j_p\}\in \{1, \ldots, n\}^p \,:\,  j_1, \ldots, j_p$ are mutually distinct$\big\}$.

Although our approach can extend to a more general setting, we focus on the full independence case to illustrate the ideas. That is, from now on, we assume that  
\begin{center} {\it  the entries of $\mathcal{X}_{n,d}$ are i.i.d. standard Gaussian. }
\end{center}
And in the following result, we present the high-dimensional regime (as $n,d$ both tend to infinity), in which the random $p$-tensor \eqref{tensor0diag}  is close to a Gaussian distribution.

\begin{thm} Fix an integer $p\geq 2$ and let the above assumptions and notation prevail. Let $Z_\nu$ be a standard Gaussian vector in $\R^\nu$ with $\nu  = p! {n\choose p}$. Then we have 
\begin{align}\label{tensorbdd}
  d_{\rm Wass}\big(   \mathcal{Y}_{n,d}, Z_\nu \big) = O\big(  \sqrt{n^{2p-1}/d   } \,\big) \,. 
\end{align}
In particular, $\mathcal{Y}_{n,d}$ is close to a standard Gaussian vector with the same dimension as soon as $n^{2p-1}/d \to 0$.

\end{thm}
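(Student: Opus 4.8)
The plan is to recognize each entry of $\mathcal{Y}_{n,d}$ as an element of the $p$th Wiener chaos and then invoke the multivariate Malliavin--Stein bound of Proposition \ref{NPR10-thm}, exactly as in the proof of Theorem \ref{thm1} but now with $p_1=\cdots=p_m=p$. Since the entries $X_{ij}=I_1(e_{ij})$ are i.i.d.\ standard Gaussian, the family $\{e_{ij}\}$ is orthonormal, and for a multi-index $\mathbf{j}=\{j_1,\ldots,j_p\}\in\Delta_p$ the orthonormality makes all contractions in the product formula \eqref{form-prod} vanish, so that $\prod_{k=1}^p X_{j_k i}=I_p(e_{j_1 i}\otimes\cdots\otimes e_{j_p i})$. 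Hence $\mathbf{Y}_{\mathbf{j}}=I_p(g_{\mathbf{j}})$ with $g_{\mathbf{j}}=\tfrac{1}{\sqrt d}\sum_{i=1}^d e_{j_1 i}\otimes\cdots\otimes e_{j_p i}$. A direct computation using independence and $\E[\prod_k X_{j_k i}]=0$ for distinct indices shows that $\E[\mathbf{Y}_{\mathbf{j}}\mathbf{Y}_{\mathbf{j}'}]$ vanishes unless $\mathbf{j}=\mathbf{j}'$, in which case it equals $1$; thus the covariance matrix of $\mathcal{Y}_{n,d}$ is the identity and the correct Gaussian target is the standard vector $Z_\nu$.

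With $C=I_\nu$ one has $\|C\|_{\rm op}=\|C^{-1}\|_{\rm op}=1$, so Proposition \ref{NPR10-thm} reduces the problem to the single estimate $d_{\rm Wass}(\mathcal{Y}_{n,d},Z_\nu)\le\big(\sum_{\mathbf{j},\mathbf{j}'}\Var(p^{-1}\langle D\mathbf{Y}_{\mathbf{j}},D\mathbf{Y}_{\mathbf{j}'}\rangle_\H)\big)^{1/2}$, where I have used that $\E[p^{-1}\langle D\mathbf{Y}_{\mathbf{j}},D\mathbf{Y}_{\mathbf{j}'}\rangle_\H]=\E[\mathbf{Y}_{\mathbf{j}}\mathbf{Y}_{\mathbf{j}'}]=C_{\mathbf{j}\mathbf{j}'}$. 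The next step is to expand $p^{-1}\langle D\mathbf{Y}_{\mathbf{j}},D\mathbf{Y}_{\mathbf{j}'}\rangle_\H$ into its chaos decomposition via \eqref{form-prod} (the $p$-chaos analogue of Fact 1): one obtains $p^{-1}\langle D\mathbf{Y}_{\mathbf{j}},D\mathbf{Y}_{\mathbf{j}'}\rangle_\H=\sum_{r=1}^{p}c_{p,r}\,I_{2(p-r)}(g_{\mathbf{j}}\wt{\otimes}_r g_{\mathbf{j}'})$, in which the $r=p$ term is the constant $C_{\mathbf{j}\mathbf{j}'}$ and the terms $1\le r\le p-1$ are genuinely fluctuating. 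Consequently the variance is a finite linear combination of the contraction norms $\|g_{\mathbf{j}}\wt{\otimes}_r g_{\mathbf{j}'}\|^2_{\H^{\otimes 2(p-r)}}$, $1\le r\le p-1$.

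The heart of the matter is then to estimate these contraction norms and to count the pairs for which they do not vanish, in the spirit of Fact 3. Because the $e_{j_k i}$ are orthonormal, contracting $r$ slots forces the column index to be shared and the $r$ contracted row indices to coincide; as in the computation leading to \eqref{doubledol} this yields $\|g_{\mathbf{j}}\otimes_r g_{\mathbf{j}'}\|^2_{\H^{\otimes 2(p-r)}}\le 1/d$ whenever the contraction is nonzero, and (via $\|f\wt{\otimes}_r g\|\le\|f\otimes_r g\|$, after replacing $g_{\mathbf{j}}$ by its symmetrization) the same bound controls the symmetrized contractions. On the combinatorial side, the $r$-contraction of $g_{\mathbf{j}}$ and $g_{\mathbf{j}'}$ can be nonzero only when the underlying $p$-subsets share at least $r$ elements, and the number of such pairs is $O(n^{2p-r})$. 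Summing, the $r$-th term contributes $O(n^{2p-r}/d)$, so the whole sum is dominated by its $r=1$ term, of order $n^{2p-1}/d$, and taking the square root yields \eqref{tensorbdd}.

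I expect the main obstacle to be the bookkeeping for general $p$: keeping track of the symmetrizations of the non-symmetric kernels $g_{\mathbf{j}}$, verifying the uniform bound $\|g_{\mathbf{j}}\wt{\otimes}_r g_{\mathbf{j}'}\|^2\lesssim 1/d$ for every $r$ with the correct ($n$-free) combinatorial constants, and the careful enumeration showing that the number of nonzero pairs at contraction level $r$ is $O(n^{2p-r})$, so that $r=1$ indeed dominates. Once these two ingredients are in place the result follows mechanically from Proposition \ref{NPR10-thm}, exactly as in Step 4 of the proof of Theorem \ref{thm1}.
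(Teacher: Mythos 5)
Your reduction founders on the covariance computation, and this is a genuine gap rather than bookkeeping. The index set $\Delta_p$ consists of all \emph{ordered} $p$-tuples with mutually distinct entries, so it contains every one of the $p!$ permutations of a given index set; since the product $\prod_{k=1}^p X_{j_k i}$ does not see the order of $j_1,\ldots,j_p$, one has $\mathbf{Y}_{\mathbf{j}}=\mathbf{Y}_{\sigma(\mathbf{j})}$ almost surely for every permutation $\sigma$. Hence $\E[\mathbf{Y}_{\mathbf{j}}\mathbf{Y}_{\sigma(\mathbf{j})}]=1$ even when $\sigma(\mathbf{j})\neq\mathbf{j}$: the covariance matrix of $\mathcal{Y}_{n,d}$ is \emph{not} $I_\nu$, but a singular matrix of rank $\nu/p!=\binom{n}{p}$ made of $p!\times p!$ blocks of ones. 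This breaks your application of Proposition \ref{NPR10-thm} in both possible readings. If $C$ is the true covariance, it is not invertible and the proposition does not apply. If you force $C=I_\nu$ as you propose, then for each of the $\nu(p!-1)\asymp n^{p}$ pairs $(\mathbf{j},\sigma(\mathbf{j}))$ with $\sigma(\mathbf{j})\neq\mathbf{j}$ the corresponding term in the bound is $\E\big[\big(0-p^{-1}\langle D\mathbf{Y}_{\mathbf{j}},D\mathbf{Y}_{\sigma(\mathbf{j})}\rangle_\H\big)^2\big]=\E\big[\big(p^{-1}\|D\mathbf{Y}_{\mathbf{j}}\|_\H^2\big)^2\big]\geq\big(\E[\mathbf{Y}_{\mathbf{j}}^2]\big)^2=1$, so the right-hand side is of order at least $n^{p/2}$ and never small. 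In fact no argument could close this gap as stated: if $Z_\nu$ literally has i.i.d.\ standard Gaussian components, then $d_{\rm Wass}(\mathcal{Y}_{n,d},Z_\nu)\geq\sqrt{2/\pi}$ uniformly in $n,d$ (test $h(x)=\tfrac{1}{\sqrt2}|x_{\mathbf{j}}-x_{\sigma(\mathbf{j})}|$, which gives $0$ for $\mathcal{Y}_{n,d}$ and $\sqrt{2/\pi}$ for the i.i.d.\ vector); the Gaussian target has to be understood as the symmetric duplication of a standard Gaussian of dimension $\binom{n}{p}$.

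This is exactly why the paper's proof has a two-step structure that your proposal skips. Step 1 applies Proposition \ref{NPR10-thm} only to the reduced vector $\mathcal{Y}^{\uparrow}_{n,d}=(\mathbf{Y}_{\mathbf{j}},\,\mathbf{j}\in\Delta_p^{\uparrow})$ indexed by \emph{strictly increasing} tuples, for which your covariance claim is correct ($C=I_{\nu/p!}$) and where your chaos representation, the contraction bounds $\|\cdot\otimes_r\cdot\|^2=O(1/d)$, and the count $O(n^{2p-1})$ of pairs with $\mathbf{j}\cap\mathbf{j}'\neq\emptyset$ go through essentially as you wrote them (your finer level-by-level count $O(n^{2p-r})$ is a harmless refinement of the paper's cruder estimate). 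Step 2 then transfers the bound to the full vector by the rescaling device of Lemma \ref{22}: to a $1$-Lipschitz $h$ on $\R^\nu$ one associates $h^{\uparrow}(x^{\uparrow})=\frac{1}{\sqrt{p!}}h(x)$, which is $1$-Lipschitz on $\R^{\nu/p!}$, whence $d_{\rm Wass}(\mathcal{Y}_{n,d},Z_\nu)\leq\sqrt{p!}\,d_{\rm Wass}(\mathcal{Y}^{\uparrow}_{n,d},Z_\nu^{\uparrow})$. Your contraction estimates are sound, but without this restriction-and-transfer step the argument you propose does not yield the theorem.
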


\noindent{\it Proof.}  We split the proof into two main steps. The first step is to restrict our attention to the random vector in $\R^{\nu/p!}$
\begin{align}\label{uparrow}
\mathcal{Y}_{n,d}^{\uparrow}= \big( \mathbf{Y}_{\textbf{j} }, \textbf{j}\in\Delta^\uparrow_p  \big)
\end{align}
and the standard Gaussian vector $Z_\nu^\uparrow$ in   $\R^{\nu/p!}$, here $\Delta^\uparrow_p= \big\{ \textbf{j}\in \Delta_p\,:\, j_1 < j_2 < \ldots < j_p \big\}$.

\medskip

\underline{\it Step 1}:  \emph{Gaussian approximation of $\mathcal{Y}_{n,d}^{\uparrow}$}.    This step can be established easily using the multivariate bound in Proposition \ref{NPR10-thm}.  To do so, we first write for $\textbf{j}\in\Delta^{\uparrow}_p$,  
$$Y_{\textbf{j}} = {\displaystyle \dfrac{1}{\sqrt{d}} \sum_{i=1}^d     \prod_{k=1}^p  X_{j_k i } }  =  I_p(f_{\textbf{j}}^{(d)})  $$
 with the kernel
 $
 f_{\textbf{j}}^{(d)}  =  \frac{1}{\sqrt{d}} \sum_{i=1}^d   \text{sym} \big( e_{j_1 i }\otimes \cdots \otimes   e_{j_p i } \big)
 $,
where $\text{sym}(\cdot)$ stands for the canonical symmetrization.   Then for any $r\in\{ 1, \ldots, p-1\}$ and $\textbf{j}, \textbf{j}' \in \Delta^\uparrow_p$,  it is immediate to verify by using the orthogonality of $\{ e_{ij}, i,j\geq 1\}$  that
$$
\big\| f_{\textbf{j}}^{(d)}  \otimes_r f_{\textbf{j}'}^{(d)}  \big\| _{\H^{\otimes 2p-2r}}^2  = O(1/d)
$$
and $\big\| f_{\textbf{j}}^{(d)}  \otimes_r f_{\textbf{j}'}^{(d)}  \big\| _{\H^{\otimes 2p-2r}}^2 = 0$ if additionally $\textbf{j}$ and $\textbf{j}'$ have no common index (\emph{i.e.} $\textbf{j}\cap\textbf{j}' =\emptyset$).  

Therefore, in view of the equation (6.2.3) in \cite{bluebook}, we have 
\[
\Var\Big(  \big\langle DI_p(f_{\textbf{j}}^{(d)}),D I_p(f_{\textbf{j}'}^{(d)}) \big\rangle_\H \Big) = O\left( \sum_{r=1}^{p-1} \big\| f_{\textbf{j}}^{(d)}  \otimes_r f_{\textbf{j}'}^{(d)}  \big\| _{\H^{\otimes 2p-2r}}^2  \right) = O(1/d)
\]
and $\Var\big(  \langle DI_p(f_{\textbf{j}}^{(d)}), D I_p(f_{\textbf{j}'}^{(d)})  \rangle_\H \big) =0$  if additionally  $\textbf{j}\cap\textbf{j}' =\emptyset$.   Thus, it follows from Proposition  \ref{NPR10-thm} that 
\begin{align*}
d_{\rm Wass}\big(  \mathcal{Y}_{n,d}^\uparrow,  Z_\nu^\uparrow \big) \leq \left(  \sum_{ \textbf{j},  \textbf{j}' \in\Delta^\uparrow_p  } \Var\Big(  p^{-1}  \big\langle DI_p(f_{\textbf{j}}^{(d)}),D I_p(f_{\textbf{j}'}^{(d)}) \big\rangle_\H \Big)    \right)^{1/2} \end{align*}
and the sum $ \sum_{ \textbf{j},  \textbf{j}' \in\Delta^\uparrow_p  }$ can be replaced by the sum  $\sum_{ \textbf{j},  \textbf{j}' \in\Delta^\uparrow_p :  \textbf{j}\cap\textbf{j}' \neq\emptyset }$. It is easy to see that the cardinality of the set 
$
\big\{ (j_1, \ldots, j_p, j_1', \ldots, j_p')\in\{1,\ldots, n\}^{2p} :\, \textbf{j}, \textbf{j}'\in\Delta_p^\uparrow ~\text{and}~ \textbf{j}\cap\textbf{j}' \neq\emptyset \big\}
$
is $O(n^{2p-1})$.  Hence we conclude our \underline{\it Step 1} with $d_{\rm Wass}\big(  \mathcal{Y}_{n,d}^\uparrow,  Z_\nu^\uparrow \big) =  O\big(  \sqrt{n^{2p-1}/d   } \,\big)$.

\medskip

\underline{\it Step 2}:  \emph{The passage from  $\mathcal{Y}_{n,d}^\uparrow$ \eqref{uparrow} to $\mathcal{Y}_{n,d}$ \eqref{tensor0diag}}.   This can be done by using an easy extension of     Lemma \ref{22}.  For the sake of completeness, we sketch the main arguments  below (Recall $\nu = p!{n\choose p}$.) 
\begin{itemize}
\item Consider  $\mathbf{x}= (\mathbf{x}_{\textbf{j}}, \textbf{j}\in\Delta_p )$, let   $h:\R^\nu\to\R$ be $1$-Lipschitz;

 \item  set  $h^\uparrow(\mathbf{x}^\uparrow) = \frac{1}{\sqrt{p!}} h(\mathbf{x})$ for any $\mathbf{x}= (\mathbf{x}_{\textbf{j}}, \textbf{j}\in\Delta_p)$ and it is easy to verify that $h^{\uparrow}$ is a $1$-Lipschitz function defined on $\R^{\nu/p!}$.

 \end{itemize}
Then $ \vert \E [ h(\mathcal{Y}_{n,d}) - h(Z_\nu)  ]  \vert  = \sqrt{p!} \,  \vert  \E [  h^\uparrow(\mathcal{Y}^\uparrow_{n,d})  - h^\uparrow(Z_\nu^\uparrow)  ]  \vert \leq  \sqrt{p!} \, d_{\rm Wass} (  \mathcal{Y}_{n,d}^\uparrow,  Z_\nu^\uparrow  ) = O (  \sqrt{n^{2p-1}/d   } )$, hence the desired bound \eqref{tensorbdd} follows immediately. \qed
 
 \medskip

Note that the above theorem is a substantial extension of the  regime in \cite{BDER16,JiangLi13}.

\section{Conclusion and open problems}

  In our work, we answered some questions meanwhile we also raised some questions. To motivate further research, we  provide a summary in this section. 
   
   \medskip

 In a large complex system formed by many independent components, many universal pictures can appear, for example the classical central limit theorem or Gaussian fluctuation  is one of them.  In this work, the large complex object we are considering is the Wishart matrix $\mathcal{W}_{n,d}$ \eqref{wnd}, which appears naturally as the sample covariance matrix in the context of multivariate analysis. It is also closely related to the so-called ``{\it principal component analysis}'', see \cite{Johnstone06}. Several previous papers \cite{BDER16, BG16, JiangLi13} have been devoted to the high dimensional limit and their methodology  consists of random matrix techniques or information-theoretical tools. However, these articles only considered the case of full independence, that is, the entries of the rectangular random matrix $\mathcal{X}_{n,d}$ that forms $\mathcal{W}_{n,d}$ are independent. Such a setting gives arise to several advantages, for example, 
 \begin{itemize}
 \item
 in the Gaussian setting, the authors of \cite{BDER16} were able to directly compute the total variation distance between $\mathcal{W}_{n,d}$ and corresponding Gaussian ensemble using the available density formulae; 
 \item
 the authors of \cite{BG16} were able to perform an induction argument in order to use the entropic CLT as well as some other tools to bound the total variation distance. 
 \end{itemize}
 The lack of independence breaks the above strategies, while it is well known that Stein's method of distributional approximation is very powerful for investigating situations in presence of dependence.  This motivated us to apply Stein's method for  studying the high-dimensional limit of large Gaussian correlated  Wishart matrices. In the present work,   we not only recover known high-dimensional regimes but also provide new phenomena in the correlated case, see our Theorem \ref{thm1} and many interesting examples in Corollary \ref{cor-fbm}.    We are also able  to   deal with non-central limit in high-dimensional regime, see Theorem \ref{thm2}, where   notably a new probabilistic object called the Rosenblatt-Wishart matrix shows up as  the highlight in our Section 3.  Our bounds are described in terms of the Wasserstein distance, different from the total variation distance in \cite{BDER16,BG16}.   As we simply consider the symmetric matrix of size $n$ as the $n^2$-dimensional vector,   the random vector $\mathcal{W}_{n,d}$ has many repeated components so that it has no density on $\R^{n^2}$, preventing us from obtaining the bounds in the total variation distance;  on the other hand, (even for the Gaussian approximation of $\mathcal{W}_{n,d}^{\rm half}$) it is extremely difficult to get a \emph{multivariate} total variation bound by using Stein's method. 
 
 \medskip
 
 In Section 4, we obtain high-dimensional regimes in the case of overall correlation. It covers the case of column independence  where the column vectors $\mathbb{X}_i   = ( X_{1i}, \ldots, X_{ni} )^T$, $i=1,\ldots, d$, of the rectangular random matrix $\mathcal{X}_{n,d}$ are independent and identically distributed. As pointed out in the paper \cite{BG16},  a straightforward application of the Stein's method (see \emph{e.g.}\cite{CM08})    gives us the following bound in the \emph{full independence case}: 
 \begin{align}
 \wt{d_2}\big(  \mathcal{W}_{n,d}, \mathcal{Z}_n \big) = O\big(n^3/\sqrt{d}\big) \,,  \label{oldreg}   
 \end{align}
 where $\mathcal{W}_{n,d}$ is given by \eqref{wnd} (with general entry distribution) and $\mathcal{Z}_n$ belongs to the GOE described in \eqref{gn}. The authors of \cite{BG16} then stated an intriguing open question: whether there is a way to use Stein's method to recover their regime ($d$ much larger than $n^3$) in any reasonable metric (total variation metric, Wasserstein metric, etc.)?  So in the present paper, we answered this open question in the \emph{Gaussian setting} and moreover, what we achieved is beyond the  full independence case: as already mentioned in \cite{BG16},  the induction argument therein (at the core of their strategy) breaks   without the full independence assumption.

   It is also worth mentioning that the authors of \cite{BG16} claimed that the case of row independence is probably much harder than column independence. This claim is reasonable in view of how they applied   Stein's method to get \eqref{oldreg}:  they first express the Wishart matrix as the normalized partial sum of i.i.d $2$-tensors
$$\mathcal{W}_{n,d} = \frac{1}{\sqrt{d}} \sum_{i=1}^d  \mathbb{Y}_i :=  \frac{1}{\sqrt{d}} \sum_{i=1}^d   \big(  \mathbb{X}_i^{\otimes 2}  - \text{diag} \big(\mathbb{X}_i^{\otimes 2} \big)   \big) $$
and then they directly applied the bound in \cite{CM08} to obtain \eqref{oldreg}.  For us, it is much easier to deal with the case of row independence and more importantly, we get better regimes. Such a difference is not a contradiction to the claim from \cite{BG16}, but instead it stems  from our different strategy of applying Stein's method:  we first consider the half-matrix or the random vector formed by the upper-triangular entries, which, in the case of row-independence (not in the case of column independence), has invertible covariance matrix, so we are able to use a powerful machinery --- the so-called Malliavin-Stein approach, to get the high-dimensional regimes for half-matrix. And the high-dimensional regime for the full-size matrix can be easily passed from that of half-matrix in view of our easy Lemma \ref{22}. This trick has also been applied in Section 4.2 to obtain the high-dimensional regime of random $p$-tensors, a natural extension of Wishart matrices.\\

To conclude this article, we propose several open questions:

\medskip

  \underline{\bf Q.1}: We introduced the  notion of Rosenblatt-Wishart matrix   in our Section 3.2 and we listed several basic properties of such a new object.  
  \begin{itemize}
 \item[]  Is   it the right candidate for the {\it random-matrix approximation} (in the sense of \cite{PPPA16}) of the so-called \emph{non-commutative Tchebycheff process} introduced in \cite{NourdinTaqqu14} ?   
  \end{itemize}

      \vspace{1mm}

  \underline{\bf Q.2}: In Corollary  \ref{cor55}, we provide the high-dimensional regimes with respect to the Gaussian approximation in the case of overall correlation.    Similar to Corollary \ref{cor-fbm}, one may be able to construct many interesting examples of the correlation functions $r$ and $s$ that arise from some generalization of fractional Brownian sheets. 
  This is left for interested readers.

      \vspace{1mm}

  \underline{\bf Q.3}: Following {\bf Q.2}, one may ask the following question:
    \begin{itemize}
 \item[]   what does the non-central high-dimensional limit look like in the case of overall correlation? Can one obtain some generalization of our Rosenblatt-Wishart matrix ?
  \end{itemize}

      \vspace{1mm}

  \underline{\bf Q.4}: In the case of full independence, we obtain the high-dimensional regime for the random $p$-tensors with respect to the Gaussian approximation. So  one may ask the following   reasonable question:
     \begin{itemize}
 \item[]   Can one formulate a natural correlated setting for random $p$-tensors?   Can  one still obtain nice high-dimensional regimes therein?  Concerning the non-central high-dimensional limit, what is the generalization of Rosenblatt-Wishart matrix in the  $p$-tensor setting?
  \end{itemize}

\end{document}